\providecommand{\U}[1]{\protect\rule{.1in}{.1in}}
\newtheorem{theorem}{Theorem}
\newtheorem{corollary}[theorem]{Corollary}
\newtheorem{example}[theorem]{Example}
\newtheorem{lemma}[theorem]{Lemma}
\newtheorem{proposition}[theorem]{Proposition}
\newtheorem{remark}[theorem]{Remark}
\newenvironment{proof}[1][Proof]{\noindent\textbf{#1.} }{\ \rule{0.5em}{0.5em}}
\newenvironment{prooff}[1]{\begin{trivlist}\item {\it
\bf Proof}\quad} {\qed\end{trivlist}}
\newcommand{\qed}{\nopagebreak\hspace*{\fill}
{\vrule width6pt height6ptdepth0pt}\par}
\begin{document}

\title{Stochastic transport by Gaussian noise}

\author{
  {\sc Franco FLANDOLI}
  \thanks{Scuola Normale Superiore,
    Piazza dei Cavalieri, 7 - 56126 Pisa, Italy.
   		E-mail:{ \tt franco.flandoli@sns.it}} 
	\ {\sc and}\ {\sc Francesco RUSSO} 
	\thanks{ENSTA Paris, Institut Polytechnique de Paris.
		Unit\'e de Math\'ematiques Appliqu\'ees (UMA). 
		E-mail:{\tt  francesco.russo@ensta.fr}.
}}

\date{October 17th 2025}

\maketitle

\begin{abstract}
Diffusion with stochastic transport is investigated here when the random
driving process is a very general Gaussian process, including Fractional
Brownian motion. The purpose is the comparison with a deterministic PDE, which
in certain cases represents the equation for the mean value. From this
equation we observe a reduced dissipation property for small times and an
enhanced diffusion for large times, with respect to delta correlated noise
when regularity is higher than the one of Brownian motion, a fact interpreted
qualitatively here as a signature of the modified dissipation observed for 2D
turbulent fluids due to the inverse cascade. We give results also for the
variance of the solution and for a scaling limit of a two-component noise input.

\end{abstract}

\medskip\noindent {\bf Key words and phrases:}
Stochastic turbulence modeling;
Transport equation;
Stochastic PDEs;
Malliavin Calculus;
Fractional Brownian motion;
Stationary Gaussian noise.

\medskip\noindent  {\bf 2020  AMS-classification}: 76M35; 
60H07; 60H15; 60G22; 76F55;
35K58.

\section{Introduction\label{Section Intro}}

This work investigates the dissipation properties of a stochastic transport
term of Fractional Brownian Motion (FBM henceforth)\ type with Hurst parameter
$H>\frac{1}{2}$, or more generally a Gaussian process with H\"{o}lder paths of
exponents greater than $\gamma_{0}>\frac{1}{2}$, compared to those of Brownian motion.

Starting from the paper \cite{Galeati}, several works proved that a suitable
scaling limit of a Brownian transport term lead to effects similar to those of
an additional dissipation or viscosity, see for instance \cite{Agresti},
\cite{CarigiLuongo}, \cite{DebPapp}, \cite{FGL}, \cite{FGLregular},
\cite{FGLRoyal}, \cite{FGL2}, \cite{FHP}, \cite{FlaHof}, \cite{FlaLuo},
\cite{FlaLuo2023}, \cite{FlaLuoLuongo}, \cite{GalLuo0}, \cite{GalLuo},
\cite{Hofmanova}.\ An open problem is the extension and possibly modification
of these results when Brownian motion is replaced by fractional Brownian
motion (FBM). Proving such scaling limits in the case of (FBM) looks very
difficult, since it requires to handle a non-commutative framework which
provokes considerable difficulties, as discussed in Section
\ref{Section non commut}. In this paper we prove a preliminary result
indicating that a new diffusion property could arise. We limit ourselves, in
our main quantitative result, to a commutative case and prove properties
related to average and variance of solution. With little abuse of language, by commutative case, we mean the situation when the coefficient $\sigma_k(x)$
of equation \eqref{eq 1} below are constant, namely they do not depend on $x$,
see also the comments at the end of the introduction.

We investigate the following model for the diffusion of a passive scalar
$\theta_{\epsilon}\left(  t,x\right)  $ (e.g. the temperature of the fluid),
with $t\geq0$ and $x\in\mathbb{R}^{2}$:\
\begin{align}
\partial_{t}\theta_{\epsilon}\left(  t,x\right)   &  =\kappa\Delta
\theta_{\epsilon}\left(  t,x\right)  +\sum_{k\in K}\left(  \sigma_{k}\left(
x\right)  \cdot\nabla\right)  \theta_{\epsilon}\left(  t,x\right)
\frac{d\mathcal{G}_{t}^{k,\epsilon}}{dt}\label{eq 1}\\
\theta_{\epsilon}|_{t=0} &  =\theta_{0},\nonumber
\end{align}
where $\kappa>0$ is a (small) diffusion constant (in most part of this work
also $\kappa=0$ is admitted), $K$ is a finite index set, $\sigma_{k}\left(
x\right)  $ are smooth divergence free vector fields and $\mathcal{G}%
_{t}^{k,\epsilon}$ are $\epsilon$-regularization of stationary increment
Gaussian processes $G_{t}^{k}$, $\epsilon>0$:
\begin{equation}
\mathcal{G}_{t}^{k,\epsilon}=\int_{0}^{t}\frac{G_{s+\epsilon}^{k}%
-G_{(s-\epsilon)_{+}}^{k}}{2\epsilon}ds,\label{def gk}%
\end{equation}
where, here and below, we denote by $(s-\epsilon)_{+}$ the maximum between
$s-\epsilon$ and zero. Hence the driving random field $\frac{d\mathcal{G}%
_{t}^{k,\epsilon}}{dt}$ is stationary. The choice to work in $\mathbb{R}^{2}$
is only due to the motivation of the inverse cascade, but the results proved
here hold in any space dimension. The interpretation is that the velocity
field $u_{\epsilon}\left(  t,x\right)  $ of the fluid is modeled by the
stationary random field%
\[
u_{\epsilon}\left(  t,x\right)  =\sum_{k\in K}\sigma_{k}\left(  x\right)
\frac{d\mathcal{G}_{t}^{k,\epsilon}}{dt}.
\]
Some of the works quoted above had in mind the case when the fluid structures
$\sigma_{k}\left(  x\right)  $ were
small, modeling small-space-scale
turbulence; accordingly, it was natural to idealize the time-structure
assuming very small time-correlation,
hence $\frac{d\mathcal{G}_{t}^{k,\epsilon}}{dt}$
related to white noise. Here we have in mind larger space
structures $\sigma_{k}\left(  x\right)  $ and longer time correlation of
$\frac{d\mathcal{G}_{t}^{k,\epsilon}}{dt}$, as it is realized when
$\mathcal{G}_{t}^{k,\epsilon}$ is related to FBM with $H>1/2$.

We follow the philosophy that the physical model is the family of equations
parametrized by $\epsilon>0$. Of course it is mathematically interesting to
investigate the limit as $\epsilon\rightarrow0$ in itself but this is not the
purpose of this work. On the contrary, we compute observations, especially
mean values and take the limit as $\epsilon\rightarrow0$ of their results,
getting clean final expressions for the observed quantities.

Our aim is understanding the dissipation, in an average sense, for $H>1/2$,
compared to the white noise case. Assume that the Gaussian fields $G_{t}^{k}$
are independent and equally distributed. We thus introduce the \textit{mean
field equation} associated to (\ref{eq 1})
\begin{align}
\partial_{t}\overline{\theta}\left(  t,x\right)   &  =\kappa\Delta
\overline{\theta}\left(  t,x\right)  +\frac{d\gamma\left(  t\right)  }%
{dt}\left(  \mathcal{L}\overline{\theta}\left(  t\right)  \right)  \left(
x\right)  \label{eq 2}\\
\overline{\theta}|_{t=0} &  =\theta_{0}.\nonumber
\end{align}
Here $\mathcal{L}$ is the elliptic operator (possibly non-uniformly elliptic)%
\begin{align}
\left(  \mathcal{L}f\right)  \left(  x\right)   &  =\operatorname{div}\left(
Q\left(  x,x\right)  \nabla f\left(  x\right)  \right)  \label{diff oper}\\
Q\left(  x,y\right)   &  =\sum_{k\in K}\sigma_{k}\left(  x\right)
\otimes\sigma_{k}\left(  y\right)  \nonumber
\end{align}
and $\gamma\left(  t\right)  $ is the variance function of the $G_{t}^{k}$:
\[
\gamma\left(  t\right)  =Var\left(  G_{t}^{k}\right)  .
\]
When we deduce equation (\ref{eq 2}) we restrict ourselves to $\frac
{d\gamma\left(  t\right)  }{dt}\geq0$ but this investigation opens the door to
the possibility of negative viscosities, mentioned for instance by
\cite{Wirth}. In relevant cases
\[
\left(  \mathcal{L}\overline{\theta}\left(  t\right)  \right)  \left(
x\right)  =\kappa_{T}\Delta\overline{\theta}\left(  t,x\right)  ,
\]
where $\kappa_{T}>0$ may be called eddy dissipation constant, and
\[
\frac{d\gamma\left(  t\right)  }{dt}\sim t^{2H-1},%
\]
for small $t$. In this case, and choosing the case $\kappa=0$ (admissible in
our results) for an easier interpretation of the results, the mean field
equation takes the form%
\[
\partial_{t}\overline{\theta}\left(  t,x\right)  =t^{2H-1}\kappa_{T}%
\Delta\overline{\theta}\left(  t,x\right)  .
\]
Thus we see that, compared to the Brownian case ($H=1/2$) where $t^{2H-1}%
\kappa_{T}$ is constant, when $H>1/2$ the diffusion coefficient $t^{2H-1}%
\kappa_{T}$ is small for small times, and large for large times: dissipation
is depleted for small times, enhanced for large ones. In Section
\ref{subsect turbulence} we propose an interpretation of these facts in
turbulence theory.

In addition, we get an equation for the limit as $\epsilon\rightarrow0$ of the
variance, which is%

\[
\partial_{t}V\left(  t,x\right)  =2\kappa\Delta V\left(  t,x\right)
+\frac{d\gamma\left(  t\right)  }{dt}\left(  \left(  \mathcal{L}V\left(
t\right)  \right)  \left(  x\right)  +2\sum_{k\in K}\left(  \left(  \sigma
_{k}\cdot\nabla\right)  \overline{\theta}\left(  t,x\right)  \right)
^{2}\right)  .
\]
Taking $\kappa=0$ again to make a simple example, noticing that $V=0$ at time
$t=0$, we get for small $t$%
\begin{align*}
\overline{\theta}\left(  t\right)   &  \sim\left(  \mathcal{L}\theta
_{0}\right)  t^{2H}\\
\sqrt{V\left(  t\right)  } &  \sim\sqrt{2\sum_{k\in K}\left(  \left(
\sigma_{k}\cdot\nabla\right)  \theta_{0}\right)  ^{2}}t^{H}.
\end{align*}
When $H>1/2$ and time is small, not only the average dissipation is
infinitesimal with respect to the Brownian case, but also a confidence
interval around the average is small.

Among the interests in the computation of the commutative case there is its
generality in terms of noise, which in particular covers all $H\in\left(
0,1\right)  $ and even beyond. We refer in particular to a stochastic analysis
related to a very singular covariance process, see e.g. \cite{mocioalca}.

There are two (potential) links between the stochastic equation (\ref{eq 1})
and its mean field equation (\ref{eq 2}): the first one is the one proved
here, namely (\ref{eq 2}) is the equation for the limit of the average. The
potential second one would be that equation (\ref{eq 2}) is the limit of
single realizations of $\theta_{\epsilon}\left(  t,x\right)  $; namely that
realizations of $\theta_{\epsilon}\left(  t,x\right)  $ concentrate around the
average. This second class of results is precisely the one started from the
seminal work \cite{Galeati}, proved in the case of Brownian motion. In the
general Gaussian case treated here this second link is much less obvious. In
the last Section \ref{Section non commut} we move a few steps in the direction
of the non-commutative case. This case is very difficult and our understanding
is only fragmentary. Subsection \ref{Section Galeati} describes an idealized
model of turbulent 2D flow incorporating the idea of inverse cascade in the
simplest possible way: the noise is divided into two components, one
small-space-scale and white noise in time, the other larger-space-scale and
correlated in time. In the larger one the space structures are constant in
space. We prove that the smaller scales produce the effect predicted by the
Boussinesq hypothesis, while the larger ones remain in their form. It is also
an example of reduction to the commutative case. Subsection
\ref{Section Galeati} is finally devoted to show the difficulty arising in the
non-commutative case, where a commutator arises as a remainder in the link
between the true expected value of the solution and the mean field equation.

Let us finally remark that the model presented here could be of interest also
in connection with other research directions on stochastic transport, not
necessarily related to Boussinesq assumption and inverse cascade. In
particular, since we have modeled larger space structures, the connection with
the general activity reported in \cite{Chapron}, originated by the seminal
work \cite{Holm}, see also \cite{Chapron2}, \cite{Crisan0}, \cite{Crisan},
\cite{Epiharati}, \cite{Harouna}, \cite{HolmLues}, \cite{Memin}.

As a last comment, let us recall that the case of commuting noise can also be
approached by means of semigroups associated to the different noise terms%
\[
e^{B_{k}\mathcal{G}_{t}^{k,\epsilon}},%
\]
where $B_{k}$ is the linear operator $\sigma_{k}\left(  x\right)  \cdot\nabla$
with suitable domain. In the commuting case,
we have $B_k B_h f = B_h B_k f$, for every $h, k \in K$ and
every smooth function $f$, so that
these semigroups commute. Let us
mention that the approach to stochastic transport by this semigroup method was
introduced long time ago by Giuseppe Da Prato (to whom this paper is
dedicated), Mimmo Iannelli and Luciano Tubaro, see for instance \cite{DIT}. 

\subsection{Tentative interpretation for 2D
turbulence\label{subsect turbulence}}

The model above of stochastic transport is certainly too idealized to deduce
consequences for the very complex behavior of turbulent 2-dimensional fluids.
Nevertheless, let us dream of a few potential links between the results of
this paper and 2D turbulence, keeping in mind that all the statements made
here require more careful investigation when the mathematical technology will
allow it. 

Concerning the idealization, a main one is that 2D turbulence refers to the
properties of the solution of 2D Euler or Navier-Stokes equations, not just
passive scalar transport equations. The reason why we take the risk of such a
link is the fact that the diffusive limit result for passive scalars of
\cite{Galeati} has been extended to the 2D Euler or Navier-Stokes equations
\cite{FGL}, \cite{FGL2}. Hence maybe in the future, if the diffusive limit may
be implemented for fractional Gaussian noise, the results could move to
nonlinear equations (opposite to the computation of mean values which does not close).

Thus, with this remark in mind, the central idea we want to express is that in
2D turbulence small space and time scale turbulent structures (eddies or
vortices) have a tendency to gather in larger structures, called inverse
cascade. The larger structures have a larger space-scale and also a larger
time-scale, a longer time correlation; the velocity has a tendency to remain
in the same direction for a longer time, compared to the small vortex
structures. Fractional Gaussian noise in the persistent regime $H>1/2$ could
be a choice to model such behavior. Also from the viewpoint of space the
structures should be larger; here, in the commutative case, we assume they are
constant in space, which is certainly an assumption made for mathematical
convenience, but it is also an idealization of large scale structures. Thus,
summarizing, our model of constant vector fields with fractional Gaussian
noise with $H>1/2$ could be seen as a strong idealization of the larger
structures which appear spontaneously (and inevitably) in 2D inverse cascade. 

If so, the conclusion of our computation is that for small amounts of time the
diffusive properties of such a larger scale noise is weaker than those of
classical white noise. But for longer times the diffusion is stronger, namely
the information reaches longer distances. Both facts are coherent with the
intuition of what larger structures do in a fluid: they cannot mix and diffuse
instantaneously as smaller structures do, but they transport information far
away faster, thanks to their better coherence. 

Although FBM\ is the paradigmatic example, it may be useful to have more
degrees of freedom in the statistical choice (see examples in \cite{Apollin1},
\cite{Apolinario}, \cite{Chevillard}) and thus Gaussian process with
H\"{o}lder paths of exponents greater than $\gamma_{0}>\frac{1}{2}$ may be a
convenient framework. 

\section{Preparatory material\label{Section preparatory}}

The material of this section is adapted from Chapter 1. of \cite{nualart}, see
\cite{krukthesis}, in particular Section 10, for a more explicit formulation
and summarized here for the reader's convenience. See also
\cite{ohashi2022rough} for some more recent developments.

We consider a Gaussian process $G:=\left(  G^{k};k\in K\right)  $ in
$\mathbb{R}^{N}$, $N=Card\left(  K\right)  $, whose components are independent
and identically distributed. Denote by $\mathcal{H}$ the self-reproducing
kernel space of $G^{1}$, with scalar product $\left\langle \cdot
,\cdot\right\rangle _{\mathcal{H}}$. Recall that $\left\langle 1_{\left[
0,t\right]  },1_{\left[  0,s\right]  }\right\rangle _{\mathcal{H}}$ gives us
the covariance function of $G^{1}$.

\begin{example}
The FBM with $H>1/2$ has covariance function given by%
\[
R\left(  t,s\right)  =\frac{1}{2}\left(  s^{2H}+t^{2H}-\left\vert
t-s\right\vert ^{2H}\right)  =\alpha_{H}\int_{0}^{t}\int_{0}^{s}\left\vert
r-u\right\vert ^{2H-2}dudr,
\]
for a suitable constant $\alpha_{H}>0$ and therefore%
\[
\left\langle f,g\right\rangle _{\mathcal{H}}=\alpha_{H}\int_{0}^{T}\int%
_{0}^{T}\left\vert r-u\right\vert ^{2H-2}f\left(  r\right)  g\left(  u\right)
dudr.
\]

\end{example}

\begin{example}
A model which seems to fit better our intuition of the intermediate vortex
structures of a 2D turbulent fluid is%
\begin{align*}
R\left(  t,s\right)   &  =\alpha_{H,\lambda}\int_{0}^{t}\int_{0}^{s}\left\vert
r-u\right\vert ^{2H-2}e^{-\lambda\left\vert r-u\right\vert }dudr\\
\left\langle f,g\right\rangle _{\mathcal{H}}  &  =\alpha_{H,\lambda}\int%
_{0}^{T}\int_{0}^{T}\left\vert r-u\right\vert ^{2H-2}e^{-\lambda\left\vert
r-u\right\vert }f\left(  r\right)  g\left(  u\right)  dudr,
\end{align*}
with $\lambda>0$, for a suitable constant $\alpha_{H,\lambda}>0$. Indeed,
\begin{align*}
\mathbb{E}\left[  \overset{\cdot}{\mathcal{G}}_{t}^{k,\epsilon}\overset{\cdot
}{\mathcal{G}}_{s}^{k,\epsilon}\right]   &  =\left(  2\epsilon\right)
^{-2}\mathbb{E}\left[  \left(  G_{t+\epsilon}-G_{t-\epsilon}\right)  \left(
G_{s+\epsilon}-G_{s-\epsilon}\right)  \right] \\
&  =\left(  2\epsilon\right)  ^{-2}\left(  R\left(  t+\epsilon,s+\epsilon
\right)  -R\left(  t+\epsilon,s-\epsilon\right)  -R\left(  t-\epsilon
,s+\epsilon\right)  +R\left(  t-\epsilon,s-\epsilon\right)  \right) \\
&  \rightarrow\partial_{t}\partial_{s}R\left(  t,s\right)  =\left\vert
t-s\right\vert ^{2H-2}e^{-\lambda\left\vert t-s\right\vert }.
\end{align*}
This process develops, locally in time, the same correlation structure of FBM,
but loses memory in the long time, closely to the fact that also large scale
vortex structures are like a birth and death process, they do not persist to infinity.
\end{example}
\begin{remark}
\label{remark large times}In the case of the previous example, call $\tau>0$ a
time of decorrelation of the vortex structures we want to model and take
$\lambda=1/\tau$. We have%
\[
\frac{d\gamma\left(  t\right)  }{dt}=2\alpha_{H,\lambda}\int_{0}^{t}%
r^{2H-2}e^{-\lambda r}dr,
\]
which behaves like
\[
\frac{d\gamma\left(  t\right)  }{dt}\sim t^{2H-1},%
\]
for small $t$ but not for large ones. The limit as $t\rightarrow\infty$ of
$\frac{d\gamma\left(  t\right)  }{dt}$ is given by ($\Gamma\left(  r\right)  $
denotes the Gamma function)
\[
2\alpha_{H,\lambda}\int_{0}^{\infty}r^{2H-2}e^{-\lambda r}dr=2\alpha
_{H,\lambda}\Gamma\left(  2H-1\right)  \tau^{2H-1}\sim\tau^{2H-1},
\]
so it remains small if $\tau$ is small.
\end{remark}

Let $\Phi:\mathbb{R}^{N}\rightarrow\mathbb{R}$ be smooth an bounded and let
$\varphi_{k}\in\mathcal{H}$, $k\in K$. Set
\[
Y=\Phi\left(  \int_{0}^{T}\varphi_{k}dG^{k};k\in K\right),
\]
where $\int_{0}^{T}\varphi_{k}dG^{k}$ are Wiener integrals. Then the Malliavin
derivative%
\[
DY=\left(  D^{\left(  k\right)  }Y;k\in K\right)
\]
is a vector process given by
\[
D_{r}^{\left(  \ell\right)  }Y=\left(  \partial_{\ell}\Phi\right)  \left(
\int_{0}^{T}\varphi_{k}dG^{k};k\in K\right)  \varphi_{\ell}\left(  r\right),
\]
see e.g. (8.3) \cite{krukthesis} and Section 6 of \cite{krukJFA}.
The integration by parts on Wiener space states that, if $Z=\left(  Z^{k};k\in
K\right)  $ is a Malliavin smooth vector of processes, then%
\[
\mathbb{E}\left[  \left\langle DY,Z\right\rangle _{\mathcal{H}} \right]
=\mathbb{E}\left[  Y \delta\left(  Z\right)  \right] ,
\]
where $\delta$ is the Skorohod integral (divergence operator), which has zero
expectation, among other properties. For instance, if $G$ is an $N$%
-dimensional Brownian motion, we get%
\[
\delta\left(  Z\right)  =\sum_{k\in K}\int_{0}^{T}Z^{k}dG^{k}.
\]
If $\varphi\in\mathcal{H}$ then the Wiener integral $\int_{0}^{T} g dG$
coincides with the Skorohod integral $\delta(g)$. Key to our developments
below is rewriting the terms
\[
\left(  \sigma_{k}\left(  x\right)  \cdot\nabla\right)  \theta_{\epsilon
}\left(  t,x\right)  \frac{G_{s+\epsilon}^{k}-G_{s-\epsilon}^{k}}{2\epsilon}%
\]
as a suitable mean zero term plus a term which remains when taking the mean
and the limit as $\epsilon\rightarrow0$ and possibly is "closed", namely
expressed in terms of the mean of the limit of $\theta_{\epsilon}\left(
t,x\right)  $. The first step is identifying a suitable mean zero part and it
will be the Skorohod integral. The second step is understanding the remaining
term, which is the so-called trace.

If $X$ is a Malliavin smooth stochastic process, we have
\begin{align*}
X_{s}\frac{G_{s+\epsilon}^{k}-G_{(s-\epsilon)_{+}}^{k}}{2\epsilon}  &
=X_{s}\frac{1}{2\epsilon}\int_{(s-\epsilon)_{+}}^{s+\epsilon}\delta G_{s}%
^{k}\\
&  =\frac{1}{2\epsilon}\int_{(s-\epsilon)_{+}}^{s+\epsilon}X_{s}\delta
G_{s}^{k}+\left\langle D_{\cdot}^{\left(  k\right)  }X_{s},\frac{1}{2\epsilon
}1_{\left[  (s-\epsilon)_{+},s+\epsilon\right]  }\right\rangle _{\mathcal{H}},
\end{align*}
hence we get the formula%

\begin{equation}
\int_{0}^{t}X_{s}\frac{G_{s+\epsilon}^{k}-G_{(s-\epsilon)_{+}}^{k}}{2\epsilon
}ds=M_{t}+\int_{0}^{t}\left\langle D_{\cdot}^{\left(  k\right)  }X_{s}%
,\frac{1}{2\epsilon}1_{\left[  (s-\epsilon)_{+},s+\epsilon\right]
}\right\rangle _{\mathcal{H}}ds, \label{basic formula}%
\end{equation}
where $M$ is the mean zero process%
\[
M_{t}=\int_{0}^{t}\left(  \frac{1}{2\epsilon}\int_{(s-\epsilon)_{+}%
}^{s+\epsilon}X_{s}\delta G_{s}^{k}\right)  ds.
\]
Formula (\ref{basic formula}) is used several times below.

\section{The commutative case\label{Section Commutative}}

We consider the equation%
\begin{equation}
\theta_{\epsilon}\left(  t,x\right)  =\theta_{0}\left(  x\right)  +\int%
_{0}^{t}\kappa\Delta\theta_{\epsilon}\left(  s,x\right)  ds+\sum_{k\in K}%
\int_{0}^{t}\left(  \sigma_{k}\cdot\nabla\right)  \theta_{\epsilon}\left(
s,x\right)  \frac{G_{s+\epsilon}^{k}-G_{(s-\epsilon)_{+}}^{k}}{2\epsilon}ds,
\label{F 1}%
\end{equation}
when the transport noise vector fields are constant, $\sigma_{k}\in
\mathbb{R}^{2}$. We assume that $G^{k}$, $k\in K$, are independent mean zero
Gaussian processes, starting at zero, equally distributed.

One can solve equation (\ref{F 1}) by Fourier transform. We use the convention
that
\begin{align*}
\widehat{f}\left(  \xi\right)   &  =\int_{\mathbb{R}^{d}}e^{-2\pi i\xi\cdot
x}f\left(  x\right)  dx\\
f\left(  x\right)   &  =\int_{\mathbb{R}^{d}}e^{2\pi i\xi\cdot x}%
\widehat{f}\left(  \xi\right)  d\xi,
\end{align*}
when the notations are meaningful, in a classical or generalized sense. The
equation in Fourier transform reads
\begin{equation}
\widehat{\theta_{\epsilon}}\left(  t,\xi\right)  =\widehat{\theta_{0}}\left(
\xi\right)  -\kappa\left\vert \xi\right\vert ^{2}\int_{0}^{t}\widehat{\theta
_{\epsilon}}\left(  s,\xi\right)  ds+i\sum_{k\in K}\left(  \sigma_{k}\cdot
\xi\right)  \int_{0}^{t}\widehat{\theta_{\epsilon}}\left(  s,\xi\right)
\frac{G_{s+\epsilon}^{k}-G_{(s-\epsilon)_{+}}^{k}}{2\epsilon}ds \label{F 2}%
\end{equation}
and, being decoupled with respect to the frequency variable $\xi$, it is
pointwise meaningful as a complex-valued ordinary differential equation
parametrized by $\xi$. In fact it can be explicitly solved:%
\begin{equation}
\widehat{\theta_{\epsilon}}\left(  t,\xi\right)  =\widehat{\theta_{0}}\left(
\xi\right)  \exp\left(  -\kappa\left\vert \xi\right\vert ^{2}t+i\sum_{k\in
K}\left(  \sigma_{k}\cdot\xi\right)  \mathcal{G}_{t}^{k,\epsilon}\right) ,
\label{F explicit}%
\end{equation}
where $\mathcal{G}_{t}^{k,\epsilon}$ is defined by (\ref{def gk}). We
introduce the following assumption.

\textbf{Assumption A}

i) $G$ is a Gaussian continuous process with stationary increments, vanishing
at zero.

ii) $\gamma\left(  t\right)  =Var\left(  G_{t}^{1}\right)  $ is a bounded
variation function.

\begin{remark}
The property of stationary increments can be relaxed, but we keep it as it is
to avoid complications.
\end{remark}

\begin{remark}
Of course this includes the case of $G$ being a Fractional Brownian motion
with any Hurst index $H$.
\end{remark}

\begin{lemma}
\label{lemma1}Suppose Assumption A. We denote
\[
\overset{\cdot}{\mathcal{V}}_{\epsilon}\left(  t\right)  =\frac{1}{\left(
2\epsilon\right)  ^{2}}\int_{0}^{t}\left\langle 1_{\left[  (t-\epsilon
)_{+},t+\epsilon\right]  },1_{\left[  (s-\epsilon)_{+},s+\epsilon\right]
}\right\rangle _{\mathcal{H}}ds
\]
and%
\begin{equation}
\mathcal{V}_{\epsilon}\left(  \tau\right)  =\int_{0}^{\tau}\overset{\cdot
}{\mathcal{V}}_{\epsilon}\left(  t\right)  dt. \label{F 3}%
\end{equation}
Then the measure $d\mathcal{V}_{\epsilon}\left(  t\right)  $ converges weak
star to $d\gamma\left(  t\right)  $, namely%
\[
\int_{0}^{T}\varphi\left(  t\right)  d\mathcal{V}_{\epsilon}\left(  t\right)
\rightarrow\int_{0}^{T}\varphi\left(  t\right)  d\gamma\left(  t\right)  ,
\]
for every $\varphi\in C\left(  \left[  0,T\right]  \right)  $.
\end{lemma}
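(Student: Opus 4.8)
The plan is to reduce the lemma to a one--dimensional statement: write $\mathcal V_\epsilon$ in closed form, identify its pointwise limit, and then upgrade pointwise convergence of the primitives to weak--$*$ convergence of the measures $d\mathcal V_\epsilon$ by means of a uniform total--variation bound. Assumption~A(ii) is used exactly once, in that last bound; Assumption~A(i) (continuity, stationary increments, vanishing at $0$) enters everywhere else. The delicate point is the total--variation bound, because the integrand defining $\dot{\mathcal V}_\epsilon$ is only $O\!\big(\gamma(2\epsilon)/\epsilon^2\big)$ pointwise and has long tails (e.g.\ for fractional Brownian motion), so a naive Cauchy--Schwarz estimate of $|\dot{\mathcal V}_\epsilon|$ diverges as $\epsilon\to0$.

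\emph{Step 1 (closed form).} By the isometry $1_{[0,t]}\mapsto G^1_t$ between $\mathcal H$ and the first Wiener chaos, $\langle 1_{[a,b]},1_{[c,d]}\rangle_{\mathcal H}=\mathbb E\big[(G^1_b-G^1_a)(G^1_d-G^1_c)\big]$. Hence, with $\dot{\mathcal G}^\epsilon_s:=(2\epsilon)^{-1}\big(G^1_{s+\epsilon}-G^1_{(s-\epsilon)_+}\big)$ (continuous in $s$ by Assumption~A(i)) and $\mathcal G^{1,\epsilon}_t=\int_0^t\dot{\mathcal G}^\epsilon_s\,ds$ as in (\ref{def gk}),
\[
\dot{\mathcal V}_\epsilon(t)=\int_0^t\mathbb E\big[\dot{\mathcal G}^\epsilon_t\,\dot{\mathcal G}^\epsilon_s\big]\,ds=\mathbb E\big[\dot{\mathcal G}^\epsilon_t\,\mathcal G^{1,\epsilon}_t\big],
\]
so that, integrating over $t$ (the integrand is symmetric in $(s,t)$) and using $\mathcal G^{1,\epsilon}_0=0$, $\mathcal V_\epsilon(\tau)$ is expressed in closed form through $\mathrm{Var}(\mathcal G^{1,\epsilon}_\tau)$, with no reference to $\mathcal H$. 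Equivalently, inserting the stationary--increment identity $R(t,s)=\tfrac12\big(\gamma(t)+\gamma(s)-\gamma(|t-s|)\big)$ and carrying out both integrations presents $\mathcal V_\epsilon(\tau)$, modulo a boundary term supported on $[0,2\epsilon]$ of size $O\!\big(\sup_{[0,2\epsilon]}\gamma\big)$, as a rescaled second difference at scale $2\epsilon$ of the twice--iterated primitive $\widetilde\Gamma(r):=\int_0^r\!\!\int_0^u\gamma(v)\,dv\,du$ of $\gamma$; this second form is the convenient one for Step~3.

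\emph{Step 2 (pointwise limit and a uniform sup--bound).} Writing $\mathcal G^{1,\epsilon}_\tau=\tfrac1{2\epsilon}\int_{\tau-\epsilon}^{\tau+\epsilon}G^1_u\,du-\tfrac1{2\epsilon}\int_0^{\epsilon}G^1_u\,du$ for $\tau\ge\epsilon$ and using $\mathbb E\big[(G^1_u-G^1_\tau)^2\big]=\gamma(|u-\tau|)$, continuity of $\gamma$ at $0$ and $\gamma(0)=0$ (all consequences of Assumption~A(i) via Gaussianity), the two averages converge in $L^2(\Omega)$ to $G^1_\tau$ and to $0$; hence $\mathcal G^{1,\epsilon}_\tau\to G^1_\tau$ in $L^2(\Omega)$, so $\mathrm{Var}(\mathcal G^{1,\epsilon}_\tau)\to\gamma(\tau)$ and $\mathcal V_\epsilon(\tau)\to\gamma(\tau)$ for every fixed $\tau\in[0,T]$. (Equivalently, $\widetilde\Gamma\in C^2$ with $\widetilde\Gamma''=\gamma$, so the second--difference expression converges to $\gamma(\tau)$.) A Cauchy--Schwarz bound on the same averages gives in addition $\sup_{\epsilon>0,\ \tau\in[0,T]}\mathcal V_\epsilon(\tau)<\infty$, since $\gamma$ is bounded on $[0,T+1]$.

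\emph{Step 3 (uniform total variation) and conclusion.} From the second--difference form, for $t\ge2\epsilon$,
\[
\dot{\mathcal V}_\epsilon(t)=\frac1{2(2\epsilon)^2}\int_0^{2\epsilon}\big(\gamma(t+u)-\gamma(t-u)\big)\,du\ +\ (\text{boundary term}),
\]
and, since $\gamma$ is of bounded variation (Assumption~A(ii)), $\gamma(t+u)-\gamma(t-u)=\int_{(t-u,\,t+u]}d\gamma$ with $d\gamma$ a finite signed measure; a Fubini estimate then yields $\int_{2\epsilon}^{T}|\dot{\mathcal V}_\epsilon(t)|\,dt\le\frac1{2(2\epsilon)^2}\big(\int_0^{2\epsilon}2u\,du\big)\int_{[0,T+1]}|d\gamma|=\tfrac12\int_{[0,T+1]}|d\gamma|$, while the part over $[0,2\epsilon)$ is $O\!\big(\sup_{[0,2\epsilon]}\gamma\big)\to0$. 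Thus $\sup_{\epsilon>0}\mathrm{TotVar}(\mathcal V_\epsilon;[0,T])<\infty$. Combining this with the pointwise convergence $\mathcal V_\epsilon(\tau)\to\gamma(\tau)$, $\mathcal V_\epsilon(0)=\gamma(0)=0$, and continuity of $\gamma$, the weak--$*$ convergence follows: for $\varphi\in C^1([0,T])$ integrate by parts, $\int_0^T\varphi\,d\mathcal V_\epsilon=\varphi(T)\mathcal V_\epsilon(T)-\int_0^T\varphi'(t)\mathcal V_\epsilon(t)\,dt$, and pass to the limit by dominated convergence (Step~2 plus the uniform sup--bound); for general $\varphi\in C([0,T])$ approximate uniformly by $C^1$ functions, the error being controlled uniformly in $\epsilon$ by $\mathrm{TotVar}(\mathcal V_\epsilon;[0,T])$ and by the total variation of $\gamma$. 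The step I expect to be the real obstacle is precisely this uniform total--variation bound: the cancellation has to be read off from the second difference of $\int_0^\cdot\gamma$, and the finiteness of the variation of $\gamma$ is indispensable there — without it $d\gamma$ is not even a finite measure and the family $d\mathcal V_\epsilon$ need not be bounded.
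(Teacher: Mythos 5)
Your route to the pointwise limit is genuinely different from the paper's, and cleaner: you use the RKHS isometry to write $\dot{\mathcal V}_\epsilon(t)=\mathbb E\big[\dot{\mathcal G}^{\epsilon}_t\,\mathcal G^{1,\epsilon}_t\big]$ and reduce $\mathcal V_\epsilon(\tau)$ to the variance of the mollified process, whereas the paper works with stationarity and telescoping of second differences of $\gamma$; your Step~3 (uniform total variation from Assumption~A(ii), then pointwise convergence of the primitives plus the TV bound) has the same structure as the paper's (\ref{prima da dim})--(\ref{seconda da dim}). However, there is a genuine gap, and it is quantitative. Your own Step~1, symmetrizing over the triangle $\{0\le s\le t\le\tau\}$, gives the exact identity $\mathcal V_\epsilon(\tau)=\tfrac12\,\mathbb E\big[(\mathcal G^{1,\epsilon}_\tau)^2\big]$; together with the $L^2$ convergence $\mathcal G^{1,\epsilon}_\tau\to G^1_\tau$ of Step~2 this yields $\mathcal V_\epsilon(\tau)\to\tfrac12\gamma(\tau)$, not $\gamma(\tau)$ — the factor $\tfrac12$ produced by passing from the half-square to the full square has been silently dropped when you conclude ``$\mathcal V_\epsilon(\tau)\to\gamma(\tau)$''. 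Your Step~3 display says the same thing: $\frac{1}{2(2\epsilon)^2}\int_0^{2\epsilon}\big(\gamma(t+u)-\gamma(t-u)\big)\,du\approx\tfrac12\gamma'(t)$ when $\gamma$ is differentiable, and in the Brownian case $\gamma(t)=t$ one checks directly that $\dot{\mathcal V}_\epsilon(t)\to\tfrac12$. So, carried out consistently, your argument proves weak-star convergence of $d\mathcal V_\epsilon$ to $\tfrac12\,d\gamma$, which is not the statement as printed, and Step~2 as written contradicts Step~1.

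Do not try to repair this by reinstating a factor $2$: $\tfrac12\,d\gamma$ is the correct limit. The expectation of the explicit solution (\ref{F explicit}) is $\widehat{\theta_0}(\xi)\exp\big(-\kappa|\xi|^2 t-\tfrac12\sigma^2(\xi)\,\mathrm{Var}(\mathcal G^{1,\epsilon}_t)\big)$, consistent with a Wong--Zakai/Stratonovich corrector $\tfrac12\sigma^2(\xi)$ in the Brownian case; and the paper's own proof establishes precisely the same constant, since its intermediate claim (\ref{prima da dim}) is $\widetilde{\mathcal V}_\epsilon(\tau)\to\tfrac12\gamma(\tau)$. In other words, the printed statement of the lemma and its proof differ by a factor $2$, and your write-up reproduces the printed constant while your computation reproduces the proof's constant; the fix is to state and prove convergence to $\tfrac12\,d\gamma$ (with the corresponding $\tfrac12$ propagating to the mean-field equation), not to hide the discrepancy. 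A lesser point: your ``boundary term supported on $[0,2\epsilon]$'' is not quite accurate, since the corrections coming from $s<\epsilon$ (where $1_{[(s-\epsilon)_+,s+\epsilon]}=1_{[0,s+\epsilon]}$) are present for every $t$, not only $t\le 2\epsilon$; they are, however, controlled by the same BV/Fubini estimate, so this only calls for a more careful bookkeeping, not a new idea.
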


\begin{remark}
The integral in (\ref{F 3}) is a well-defined Bochner integral in
$\mathcal{H}$.
\end{remark}

\begin{remark}
One has $D_{r}^{\left(  k\right)  }\mathcal{G}_{t}^{k^{\prime},\epsilon}=0$
for $k^{\prime}\neq k$ and%
\begin{equation}
D_{r}^{\left(  k\right)  }\mathcal{G}_{t}^{k,\epsilon}=\frac{1}{2\epsilon}%
\int_{0}^{t}1_{\left[  (s-\epsilon)_{+},s+\epsilon\right]  }\left(  r\right)
ds. \label{approximate derivative}%
\end{equation}
Indeed (the case $k^{\prime}\neq k$ is similar),
\[
D_{r}^{\left(  k\right)  }\int_{(s-\epsilon)_{+}}^{s+\epsilon}\delta G_{u}%
^{k}=1_{\left[  (s-\epsilon)_{+},s+\epsilon\right]  }\left(  r\right) .
\]

\end{remark}

\begin{prooff} \ (of Lemma \ref{lemma1}).
By \eqref{F 3}, we have
\begin{align*}
\mathcal{V}_{\epsilon}\left(  \tau\right)   &  =\int_{0}^{\tau}\frac
{1}{\left(  2\epsilon\right)  ^{2}}\int_{0}^{t}\left\langle 1_{\left[
(t-\epsilon)_{+},t+\epsilon\right]  },1_{\left[  (s-\epsilon)_{+},s+\epsilon\right]  }\right\rangle _{\mathcal{H}}dsdt.
\end{align*}
Therefore, for $\tau \ge \epsilon$, we can decompose
$$  \mathcal{V}_{\epsilon}\left(  \tau\right) =
\mathcal{V}_{\epsilon}\left(  \epsilon\right)  +\widetilde{\mathcal{V} }_{\epsilon}\left(  \tau\right),$$
where $\widetilde{\mathcal{V}}_{\epsilon}\left(  \tau\right)  =0$ for
$\tau\leq\epsilon$,
\[
\widetilde{\mathcal{V}}_{\epsilon}\left(  \tau\right)  =\int_{\epsilon}^{\tau
}\frac{1}{\left(  2\epsilon\right)  ^{2}}\int_{\epsilon}^{t}\left\langle
1_{\left[  (t-\epsilon)_{+},t+\epsilon\right]  },1_{\left[  (s-\epsilon
)_{+},s+\epsilon\right]  }\right\rangle _{\mathcal{H}}dsdt,
\]
for $\tau\geq\epsilon$. It is not difficult to show that $\mathcal{V}%
_{\epsilon}\left(  \epsilon\right)  \rightarrow0$ as $\epsilon\rightarrow0$.
It remains to show that $d\widetilde{\mathcal{V}}_{\epsilon}\left(  t\right)
$ converges weak star to $d\gamma\left(  t\right)  $. For this it is enough to
show that
\begin{equation}
\widetilde{\mathcal{V}}_{\epsilon}\left(  \tau\right)  \rightarrow\frac{1}%
{2}\gamma\left(  \tau\right)  \text{ for every }\tau\in\left[  0,T\right]
\label{prima da dim}%
\end{equation}
and that $\sup_{\epsilon\in\left(  0,1\right)  }\int_{0}^{T}\left\vert
\overset{\cdot}{\widetilde{\mathcal{V}}_{\epsilon}}\left(  t\right)
\right\vert dt<\infty$, hence that
\begin{equation}
\sup_{\epsilon\in\left(  0,1\right)  }\int_{\epsilon}^{T}\frac{1}{\left(
2\epsilon\right)  ^{2}}\left\vert \int_{\epsilon}^{t}\left\langle 1_{\left[
(t-\epsilon)_{+},t+\epsilon\right]  },1_{\left[  (s-\epsilon)_{+}%
,s+\epsilon\right]  }\right\rangle _{\mathcal{H}}ds\right\vert dt<\infty.
\label{seconda da dim}%
\end{equation}
At this point, for $t\geq s\geq\epsilon$ (denote any one of the $G^{k}$ by
$G$), using stationarity of the increments,%
\begin{align*}
&  \left\langle 1_{\left[  (t-\epsilon)_{+},t+\epsilon\right]  },1_{\left[
(s-\epsilon)_{+},s+\epsilon\right]  }\right\rangle _{\mathcal{H}}\\
&  =Cov\left(  G_{t+\epsilon}-G_{(t-\epsilon)_{+}},G_{s+\epsilon
}-G_{(s-\epsilon)_{+}}\right) \\
&  =Cov\left(  G_{t-s+2\epsilon}-G_{t-s},G_{2\epsilon}\right) \\
&  =Cov\left(  G_{t-s+2\epsilon},G_{2\epsilon}\right)  -Cov\left(
G_{t-s},G_{2\epsilon}\right) \\
&  =\frac{1}{2}\left(  \gamma\left(  t-s+2\epsilon\right)  +\gamma\left(
2\epsilon\right)  -\gamma\left(  t-s\right)  \right) \\
&  =\frac{1}{2}\left(  \gamma\left(  2\epsilon\right)  +\gamma\left(
t-s\right)  -\gamma\left(  t-s-2\epsilon\right)  \right) ,
\end{align*}
with the convention that $\gamma$ is extended by parity for negative
arguments. So%
\begin{align*}
&  \left\langle 1_{\left[  (t-\epsilon)_{+},t+\epsilon\right]  },1_{\left[
(s-\epsilon)_{+},s+\epsilon\right]  }\right\rangle _{\mathcal{H}}\\
&  =\frac{1}{2}\left(  \gamma\left(  t-s+2\epsilon\right)  -\gamma\left(
t-s\right)  \right) \\
&  -\frac{1}{2}\left(  \gamma\left(  t-s\right)  -\gamma\left(  t-s-2\epsilon
\right)  \right) .
\end{align*}
Now, for $\tau\geq\epsilon$, by telescopy,
\begin{align*}
\widetilde{\mathcal{V}}_{\epsilon}\left(  \tau\right)   &  =\int_{\epsilon
}^{\tau}\frac{1}{8\epsilon^{2}}\int_{0}^{t}\left(  \gamma\left(
t-s+2\epsilon\right)  -\gamma\left(  t-s\right)  \right)  dsdt\\
&  -\int_{\epsilon}^{\tau}\frac{1}{8\epsilon^{2}}\int_{0}^{t}\left(
\gamma\left(  t-s\right)  -\gamma\left(  t-s-2\epsilon\right)  \right)  dsdt
\end{align*}%
\begin{align*}
&  =\int_{\epsilon}^{\tau}\frac{1}{8\epsilon^{2}}\int_{0}^{t-\epsilon}\left(
\gamma\left(  s+2\epsilon\right)  -\gamma\left(  s\right)  \right)  dsdt\\
&  -\int_{\epsilon}^{\tau}\frac{1}{8\epsilon^{2}}\int_{0}^{t-\epsilon}\left(
\gamma\left(  s\right)  -\gamma\left(  s-2\epsilon\right)  \right)  dsdt
\end{align*}%
\begin{align*}
&  =\int_{\epsilon}^{\tau}\frac{1}{8\epsilon^{2}}\int_{\epsilon}^{t}\left(
\gamma\left(  s+\epsilon\right)  -\gamma\left(  s-\epsilon\right)  \right)
dsdt\\
&  -\int_{\epsilon}^{\tau}\frac{1}{8\epsilon^{2}}\int_{-\epsilon}^{t-\epsilon
}\left(  \gamma\left(  s+\epsilon\right)  -\gamma\left(  s-\epsilon\right)
\right)  dsdt.
\end{align*}
So, again by telescopy, for $\tau\geq\epsilon$,%
\[
\widetilde{\mathcal{V}}_{\epsilon}\left(  \tau\right)  =\frac{1}{2}\left(
I_{1}\left(  \tau,\epsilon\right)  -I_{2}\left(  \tau,\epsilon\right)  \right)
,
\]
where%
\begin{align*}
I_{1}\left(  \tau,\epsilon\right)   &  =\int_{\epsilon}^{\tau}\frac
{1}{2\epsilon}\int_{t-\epsilon}^{t+\epsilon}\frac{\gamma\left(  s+\epsilon
\right)  -\gamma\left(  s-\epsilon\right)  }{2\epsilon}dsdt\\
I_{2}\left(  \tau,\epsilon\right)   &  =\int_{\epsilon}^{\tau}\frac
{1}{2\epsilon}\int_{-\epsilon}^{0}\frac{\gamma\left(  s+\epsilon\right)
-\gamma\left(  s-\epsilon\right)  }{2\epsilon}dsdt.
\end{align*}
By Fubini theorem, using that
\[
\gamma\left(  s+\epsilon\right)  -\gamma\left(  s-\epsilon\right)
=\int_{s-\epsilon}^{s+\epsilon}d\gamma\left(  r\right) ,
\]
we can easily show that%
\begin{align*}
I_{1}\left(  \tau,\epsilon\right)   &  \rightarrow\gamma\left(  \tau\right) \\
I_{2}\left(  \tau,\epsilon\right)   &  \rightarrow0.
\end{align*}
This shows (\ref{prima da dim}). Concerning (\ref{seconda da dim}), we proceed
similarly. By the same arguments as before we show that%
\begin{align*}
&  \int_{\epsilon}^{T}\frac{1}{\left(  2\epsilon\right)  ^{2}}\left\vert
\int_{\epsilon}^{t}\left\langle 1_{\left[  (t-\epsilon)_{+},t+\epsilon\right]
},1_{\left[  (s-\epsilon)_{+},s+\epsilon\right]  }\right\rangle _{\mathcal{H}%
}ds\right\vert dt\\
&  \leq\int_{\epsilon}^{\tau}\frac{1}{2\epsilon}\left\vert \int_{t-\epsilon
}^{t+\epsilon}\frac{\gamma\left(  s+\epsilon\right)  -\gamma\left(
s-\epsilon\right)  }{2\epsilon}ds\right\vert dt\\
&  +\int_{\epsilon}^{\tau}\frac{1}{2\epsilon}\left\vert \int_{-\epsilon}%
^{0}\frac{\gamma\left(  s+\epsilon\right)  -\gamma\left(  s-\epsilon\right)
}{2\epsilon}ds\right\vert dt.
\end{align*}
We proceed as above, using that%
\[
\left\vert \gamma\left(  s+\epsilon\right)  -\gamma\left(  s-\epsilon\right)
\right\vert \leq\int_{s-\epsilon}^{s+\epsilon}d\left\Vert \gamma\right\Vert
\left(  r\right) ,
\]
where $\left\Vert \gamma\right\Vert $ is the total variation function.
\end{prooff}

\begin{lemma}
\label{lemma link Malliavin derivative to function}$D_{r}^{\left(  k\right)
}\widehat{\theta}_{\epsilon}\left(  t,\xi\right)  $ exists and it is given by%
\begin{equation}
D_{r}^{\left(  k\right)  }\widehat{\theta_{\epsilon}}\left(  t,\xi\right)
=\widehat{\theta_{\epsilon}}\left(  t,\xi\right)  i\left(  \sigma_{k}\cdot
\xi\right)  \frac{1}{2\epsilon}\int_{0}^{t}1_{\left[  (s-\epsilon
)_{+},s+\epsilon\right]  }\left(  r\right)  ds. \label{F Malliavin}%
\end{equation}

\end{lemma}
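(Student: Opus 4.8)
The plan is to read the Malliavin derivative directly off the explicit solution formula (\ref{F explicit}). For fixed $t$ and $\xi$, that formula exhibits $\widehat{\theta_{\epsilon}}(t,\xi)$ as a smooth function of the finite Gaussian family $\bigl(\mathcal{G}_{t}^{1,\epsilon},\dots,\mathcal{G}_{t}^{N,\epsilon}\bigr)$, namely $\widehat{\theta_{\epsilon}}(t,\xi)=F\bigl(\mathcal{G}_{t}^{1,\epsilon},\dots,\mathcal{G}_{t}^{N,\epsilon}\bigr)$ with $F(y_{1},\dots,y_{N})=\widehat{\theta_{0}}(\xi)\exp\bigl(-\kappa|\xi|^{2}t+i\sum_{k\in K}(\sigma_{k}\cdot\xi)y_{k}\bigr)$. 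Since $|F(y)|=|\widehat{\theta_{0}}(\xi)|\,e^{-\kappa|\xi|^{2}t}$ is constant in $y$ and every partial derivative of $F$ equals $F$ times a constant, $F$ and all its partial derivatives are bounded on $\mathbb{R}^{N}$; treating real and imaginary parts separately, we are exactly in the situation recalled in Section \ref{Section preparatory}.

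First I would identify each $\mathcal{G}_{t}^{k,\epsilon}$ with a Wiener integral. Writing $G_{u}^{k}$ for the Wiener integral of $1_{[0,u]}$ against $G^{k}$ and exchanging the average $\tfrac{1}{2\epsilon}\int_{0}^{t}(\cdot)\,ds$ with that Wiener integral (a bounded linear map $\mathcal{H}\to L^{2}(\Omega)$), definition (\ref{def gk}) gives $\mathcal{G}_{t}^{k,\epsilon}=\int_{0}^{T} g_{t}^{\epsilon}\,dG^{k}$ with $g_{t}^{\epsilon}(r)=\tfrac{1}{2\epsilon}\int_{0}^{t}1_{[(s-\epsilon)_{+},s+\epsilon]}(r)\,ds$. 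Here $g_{t}^{\epsilon}\in\mathcal{H}$: each $1_{[(s-\epsilon)_{+},s+\epsilon]}=1_{[0,s+\epsilon]}-1_{[0,(s-\epsilon)_{+}]}$ lies in $\mathcal{H}$ and the $\mathcal{H}$-valued integral over $s$ is well defined, as in the Remark following Lemma \ref{lemma1}; this is also the content of (\ref{approximate derivative}), which already records $D_{r}^{(k)}\mathcal{G}_{t}^{k,\epsilon}=g_{t}^{\epsilon}(r)$ and $D_{r}^{(k)}\mathcal{G}_{t}^{k',\epsilon}=0$ for $k'\neq k$.

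Then I would apply the chain rule for the Malliavin derivative in the form recalled in Section \ref{Section preparatory}, with $\Phi=F$ and $\varphi_{k}=g_{t}^{\epsilon}$ for each $k\in K$: $\widehat{\theta_{\epsilon}}(t,\xi)$ is Malliavin differentiable and
\[
D_{r}^{(k)}\widehat{\theta_{\epsilon}}(t,\xi)=(\partial_{k}F)\bigl(\mathcal{G}_{t}^{1,\epsilon},\dots,\mathcal{G}_{t}^{N,\epsilon}\bigr)\,g_{t}^{\epsilon}(r)=i(\sigma_{k}\cdot\xi)\,\widehat{\theta_{\epsilon}}(t,\xi)\,\frac{1}{2\epsilon}\int_{0}^{t}1_{[(s-\epsilon)_{+},s+\epsilon]}(r)\,ds ,
\]
which is precisely (\ref{F Malliavin}).

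There is no genuinely hard step here; the only points deserving a word of care are the complex-valued setting — dealt with coordinatewise as above — and the facts that $g_{t}^{\epsilon}\in\mathcal{H}$ and that $\mathcal{G}_{t}^{k,\epsilon}$ is the associated Wiener integral, both immediate from the preparatory section. An alternative, slightly longer route would be to differentiate the integral equation (\ref{F 2}) itself, using the closability of $D$, its commutation with the deterministic time integral and with multiplication by $-\kappa|\xi|^{2}$, and formula (\ref{approximate derivative}) for $D_{r}^{(k)}\mathcal{G}_{s}^{k,\epsilon}$; this yields a linear scalar ODE in $t$ for $D_{r}^{(k)}\widehat{\theta_{\epsilon}}(t,\xi)$ whose unique solution is again (\ref{F Malliavin}). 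I would present the explicit-formula argument as the main one.
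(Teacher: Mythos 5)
Your proof is correct and follows essentially the same route as the paper: the paper also differentiates the explicit formula (\ref{F explicit}) via the chain rule and then invokes (\ref{approximate derivative}) for $D_{r}^{(k)}\mathcal{G}_{t}^{k',\epsilon}$, and it likewise notes (without pursuing it) the alternative argument through equation (\ref{F 2}) and uniqueness. Your extra care about $g_{t}^{\epsilon}\in\mathcal{H}$ and the boundedness of the exponential map is a welcome but inessential elaboration of the same argument.
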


\begin{proof}
We could deduce this formula from equation (\ref{F 2}) and its uniqueness
property. For shortness, let us use the explicit formula (\ref{F explicit}).
It gives us%
\begin{align*}
D_{r}^{\left(  k\right)  }\widehat{\theta_{\epsilon}}\left(  t,\xi\right)   &
=\widehat{\theta_{\epsilon}}\left(  t,\xi\right)  i\sum_{k^{\prime}\in
K}\left(  \sigma_{k^{\prime}}\cdot\xi\right)  D_{r}^{\left(  k\right)
}\mathcal{G}_{t}^{k^{\prime},\epsilon}\\
&  =\widehat{\theta_{\epsilon}}\left(  t,\xi\right)  i\left(  \sigma_{k}%
\cdot\xi\right)  \frac{1}{2\epsilon}\int_{0}^{t}1_{\left[  (s-\epsilon
)_{+},s+\epsilon\right]  }\left(  r\right)  ds,
\end{align*}
where we have used (\ref{approximate derivative}).
\end{proof}

We come back to equation (\ref{F 2}). Using (\ref{basic formula}) we have%
\begin{align*}
\widehat{\theta_{\epsilon}}\left(  t,\xi\right)   &  =\widehat{\theta_{0}%
}\left(  \xi\right)  -\kappa\left\vert \xi\right\vert ^{2}\int_{0}%
^{t}\widehat{\theta_{\epsilon}}\left(  s,\xi\right)  ds+M_{t}\\
&  +i\sum_{k\in K}\left(  \sigma_{k}\cdot\xi\right)  \int_{0}^{t}\left\langle
D_{\cdot}^{\left(  k\right)  }\widehat{\theta_{\epsilon}}\left(  s,\xi\right)
,\frac{1}{2\epsilon}1_{\left[  (s-\epsilon)_{+},s+\epsilon\right]
}\right\rangle _{\mathcal{H}}ds,
\end{align*}
where $M$ is a mean zero process, and using (\ref{F Malliavin})
\begin{align*}
&  =\widehat{\theta_{0}}\left(  \xi\right)  -\kappa\left\vert \xi\right\vert
^{2}\int_{0}^{t}\widehat{\theta_{\epsilon}}\left(  s,\xi\right)  ds+M_{t}\\
&  -\sum_{k\in K}\left(  \sigma_{k}\cdot\xi\right)  ^{2}\int_{0}%
^{t}\widehat{\theta_{\epsilon}}\left(  s,\xi\right)  \frac{1}{\left(
2\epsilon\right)  ^{2}}\int_{0}^{s}\left\langle 1_{\left[  (r-\epsilon
)_{+},r+\epsilon\right]  },1_{\left[  (s-\epsilon)_{+},s+\epsilon\right]
}\right\rangle _{\mathcal{H}}drds.
\end{align*}
Recalling the definition of $\mathcal{V}_{\epsilon}$\ and taking expectation
we get for $e_{\epsilon}\left(  t,\xi\right)  =\mathbb{E}\left[
\widehat{\theta_{\epsilon}}\left(  t,\xi\right)  \right]  $%

\begin{equation}
e_{\epsilon}\left(  t,\xi\right)  =\widehat{\theta_{0}}\left(  \xi\right)
-\kappa\left\vert \xi\right\vert ^{2}\int_{0}^{t}e_{\epsilon}\left(
s,\xi\right)  ds-\sigma^{2}\left(  \xi\right)  \int_{0}^{t}e_{\epsilon}\left(
s,\xi\right)  d\mathcal{V}_{\epsilon}\left(  s\right) , \label{F 12}%
\end{equation}
where
\[
\sigma^{2}\left(  \xi\right)  :=\sum_{k\in K}\left(  \sigma_{k}\cdot
\xi\right)  ^{2}.
\]
The limit
\[
e\left(  t,\xi\right)  =\lim_{\epsilon\rightarrow0}e_{\epsilon}\left(
t,\xi\right)
\]
exists (it can be deduced by a stability argument on the differential equation
and Lemma \ref{lemma1}, but for shortness let us invoke here the explicit
formula (\ref{F explicit})). Taking the limit as $\epsilon\rightarrow0$ into
the previous equation, by Lemma \ref{lemma1} we get the result of the
following corollary.

\begin{corollary}
Suppose Assumption A. Then the function $e\left(  t,\xi\right)  $ satisfies
the closed form equation%
\[
e\left(  t,\xi\right)  =\widehat{\theta_{0}}\left(  \xi\right)  -\kappa
\left\vert \xi\right\vert ^{2}\int_{0}^{t}e\left(  s,\xi\right)  ds-\sigma
^{2}\left(  \xi\right)  \int_{0}^{t}e\left(  s,\xi\right)  d\gamma\left(
s\right) .
\]

\end{corollary}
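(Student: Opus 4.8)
The plan is to pass to the limit $\epsilon\to0$ in equation (\ref{F 12}), which has already been established:
\[
e_\epsilon(t,\xi)=\widehat{\theta_0}(\xi)-\kappa|\xi|^2\int_0^t e_\epsilon(s,\xi)\,ds-\sigma^2(\xi)\int_0^t e_\epsilon(s,\xi)\,d\mathcal{V}_\epsilon(s).
\]
The first point is to record, from the explicit formula (\ref{F explicit}), the existence of the limit $e(t,\xi)=\lim_{\epsilon\to0}e_\epsilon(t,\xi)$ together with enough uniform control on the family $\{e_\epsilon(\cdot,\xi)\}_\epsilon$. Taking expectations in (\ref{F explicit}) and using that the $G^k$ are independent and equidistributed, $e_\epsilon(t,\xi)$ equals $\widehat{\theta_0}(\xi)$ times the exponential of a deterministic, real, nonpositive function of $t$; hence $|e_\epsilon(s,\xi)|\le|\widehat{\theta_0}(\xi)|$ for all $s$ and all $\epsilon$. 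Moreover $\mathcal{G}_t^{k,\epsilon}\to G_t^k$ in $L^2$, uniformly in $t\in[0,T]$, by a standard mollification estimate (which uses that $\gamma$ is continuous at the origin, a consequence of the $L^2$-continuity of $G^k$), so that $e_\epsilon(\cdot,\xi)\to e(\cdot,\xi)$ uniformly on $[0,T]$; in particular $e(\cdot,\xi)\in C([0,T])$.

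With this in hand I would treat the three terms of (\ref{F 12}) one at a time. The constant term is unchanged. In $\int_0^t e_\epsilon(s,\xi)\,ds$ one passes to the limit by dominated convergence, using $|e_\epsilon|\le|\widehat{\theta_0}(\xi)|$ and pointwise convergence. The delicate term is the Stieltjes integral $\int_0^t e_\epsilon(s,\xi)\,d\mathcal{V}_\epsilon(s)$, in which both the integrand and the integrating measure depend on $\epsilon$. Here I would write
\[
\int_0^t e_\epsilon(s,\xi)\,d\mathcal{V}_\epsilon(s)=\int_0^t e(s,\xi)\,d\mathcal{V}_\epsilon(s)+\int_0^t\bigl(e_\epsilon(s,\xi)-e(s,\xi)\bigr)\,d\mathcal{V}_\epsilon(s).
\]
The first integral converges to $\int_0^t e(s,\xi)\,d\gamma(s)$ directly by Lemma \ref{lemma1} (with terminal time $t$), applied to the fixed continuous test function $e(\cdot,\xi)$. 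The second integral is at most $\|e_\epsilon(\cdot,\xi)-e(\cdot,\xi)\|_{C([0,t])}$ times the total variation of $d\mathcal{V}_\epsilon$ on $[0,t]$: the first factor tends to $0$ by the uniform convergence just established, and the second factor is bounded uniformly in $\epsilon$ by the estimate (\ref{seconda da dim}) proved inside Lemma \ref{lemma1} (together with the vanishing of the $\mathcal{V}_\epsilon(\epsilon)$ contribution noted there). Hence this term disappears in the limit, and combining the three limits gives exactly the closed-form equation for $e(t,\xi)$.

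The only genuine obstacle is this last term, $\int_0^t(e_\epsilon-e)\,d\mathcal{V}_\epsilon$: the weak-star convergence supplied by Lemma \ref{lemma1} only applies to a \emph{fixed} continuous test function, whereas our integrand moves with $\epsilon$. The device that resolves it is to combine the uniform-in-$\epsilon$ bound on the total variation of $d\mathcal{V}_\epsilon$ (a byproduct of the proof of Lemma \ref{lemma1}) with the \emph{uniform} convergence of the integrands $e_\epsilon(\cdot,\xi)\to e(\cdot,\xi)$ on $[0,T]$ (a byproduct of the explicit formula (\ref{F explicit})); everything else is routine.
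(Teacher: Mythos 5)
Your proposal is correct and follows essentially the same route as the paper: the paper also passes to the limit in equation (\ref{F 12}), invoking the explicit formula (\ref{F explicit}) for the existence (and boundedness) of $e(t,\xi)=\lim_{\epsilon\to0}e_\epsilon(t,\xi)$ and Lemma \ref{lemma1} for the convergence of $d\mathcal{V}_\epsilon$ to $d\gamma$. The only difference is that you spell out the step the paper leaves implicit, namely splitting $\int_0^t e_\epsilon\,d\mathcal{V}_\epsilon$ and controlling the term with the moving integrand via the uniform convergence $e_\epsilon\to e$ and the uniform total-variation bound (\ref{seconda da dim}), which is a legitimate filling-in of the paper's ``stability argument''.
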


Now we go back in physical space by inverse Fourier transform. Until now we
have assumed only Assumption A.

\begin{remark}
  \label{Remark about singularity}
  Without additional assumptions the Fourier
coefficients $e\left(  t,\xi\right)  $ could not have easy decay properties
for large $\xi$, in the case when the measure $d\gamma\left(  s\right)  $ has
a negative component, and as a consequence the inverse Fourier transform could
give us a true distribution, which solves in the distributional sense the
equation written below but should require a closer investigation, due to its
singularity. Similarly, if $d\gamma\left(  s\right)  /ds$ is well-defined, non
negative, but it is not bounded above, like in the case of FBM with $H<1/2$
where it is diverges at $s=0$, we are faced - in the inverse Fourier transform
- with a parabolic equation equation with singular second order coefficients,
which is uncommon and also requires special theory.
\end{remark}

\begin{remark}
\label{Remark about Physics} In addition, the present work has a precise
motivation from 2D inverse cascade turbulence and, in that framework, we
expect the large vortex structures being positively correlated in time, as it
is for $H>1/2$, not negatively as it is for $H<1/2$. Therefore we prefer to
assume $d\gamma\left(  s\right)  /ds$ bounded from above for reasons of
coherence with the purposes of this work. On the contrary, the case when
$d\gamma\left(  s\right)  $ has a negative component could correspond to
negative viscosity, a debated phenomenon for turbulent fluids \cite{Wirth},
perhaps also associated with the 2D inverse cascade. However, it must be
better understood and thus we postpone to future works.
\end{remark}

For the reasons highlighted in the previous two remarks, we introduce the
following additional assumption:

\textbf{Assumption B}

i) the measure $d\gamma\left(  s\right)  $ has a non-negative density
$d\gamma\left(  s\right)  /ds$ with respect to Lebesgue measure

ii) and there exists $C>0$ such that $d\gamma\left(  s\right)  /ds\leq C$ for
a.e. $s\geq0$.

We may call "regular" the case when Assumption B is satisfied and "singular"
the other case, which covers measures with negative components of viscosity
and unbounded positive viscosities.

Under Assumption B, $\left\vert e\left(  t,\xi\right)  \right\vert
\leq\left\vert \widehat{\theta_{0}}\left(  \xi\right)  \right\vert $, hence
the following.

\begin{corollary}
Suppose Assumptions A and B and $\theta_{0}\in L^{2}\left(  \mathbb{R}%
^{2}\right)  $. Then%
\[
\overline{\theta}\left(  t,x\right)  :=\int_{\mathbb{R}^{d}}e^{2\pi i\xi\cdot
x}e\left(  t,\xi\right)  d\xi
\]
has the property $\overline{\theta}\left(  t\right)  \in L^{2}\left(
\mathbb{R}^{2}\right)  $, it is weakly continuous in time in $L^{2}\left(
\mathbb{R}^{2}\right)  $ and it satisfies, in the sense of distribution,
\[
\overline{\theta}\left(  t,x\right)  =\theta_{0}\left(  x\right)  +\int%
_{0}^{t}\kappa\Delta\overline{\theta}\left(  s,x\right)  ds+\int_{0}%
^{t}\left(  \mathcal{L}\overline{\theta}\left(  s\right)  \right)  \left(
x\right)  d\gamma\left(  s\right) ,
\]
where $\mathcal{L}$ is the differential operator defined by (\ref{diff oper}).
\end{corollary}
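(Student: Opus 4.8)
The plan is to transfer the already-established Fourier-side identity for $e(t,\xi)$ back to physical space via the Parseval/Plancherel machinery, checking at each stage that the relevant objects are genuine $L^2$ functions (or $L^2$-valued paths) so that the inverse Fourier transform makes sense classically and the distributional identity can be read off termwise. First I would record the a priori bound: from the closed integral equation for $e(t,\xi)$ in the preceding corollary, Gr\"onwall together with the sign conditions $\kappa|\xi|^2\ge 0$, $\sigma^2(\xi)\ge 0$ and $d\gamma(s)\ge 0$ (Assumption B(i)) gives $|e(t,\xi)|\le|\widehat{\theta_0}(\xi)|$ for all $t,\xi$; hence $e(t,\cdot)\in L^2(\mathbb{R}^2)$ with $\|e(t,\cdot)\|_{L^2}\le\|\widehat{\theta_0}\|_{L^2}=\|\theta_0\|_{L^2}$, and $\overline{\theta}(t,\cdot):=\mathcal{F}^{-1}e(t,\cdot)$ is a well-defined element of $L^2(\mathbb{R}^2)$ with the same bound.

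Next I would establish weak continuity in time. Fix $\psi\in L^2(\mathbb{R}^2)$; then $t\mapsto\langle\overline{\theta}(t),\psi\rangle=\int e(t,\xi)\overline{\widehat{\psi}(\xi)}\,d\xi$, and continuity of this follows from the dominated convergence theorem once one knows $t\mapsto e(t,\xi)$ is continuous for each $\xi$ (immediate from the explicit representation, or from the integral equation) together with the uniform domination $|e(t,\xi)\widehat{\psi}(\xi)|\le|\widehat{\theta_0}(\xi)||\widehat{\psi}(\xi)|\in L^1$. This gives weak continuity in $L^2(\mathbb{R}^2)$.

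Then comes the identification of the equation. I would take the scalar ODE-in-integral-form for $e(t,\xi)$, pair it against $\widehat{\varphi}(\xi)$ for a test function $\varphi\in\mathcal{S}(\mathbb{R}^2)$ (or $C_c^\infty$), and integrate in $\xi$. The left side becomes $\langle\overline{\theta}(t),\varphi\rangle$ and $\langle\theta_0,\varphi\rangle$; the term $\kappa|\xi|^2\int_0^t e(s,\xi)\,ds$ paired with $\widehat\varphi$ becomes, by Fubini (justified by the $L^1$ domination in $\xi$ and boundedness of $|\xi|^2\widehat\varphi$, plus finiteness of $\int_0^t ds$), $\int_0^t\langle\overline{\theta}(s),\kappa\Delta\varphi\rangle\,ds$, i.e. $\int_0^t\kappa\langle\Delta\overline{\theta}(s),\varphi\rangle\,ds$ in the distributional sense. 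For the last term I would use that $\sigma^2(\xi)=\sum_k(\sigma_k\cdot\xi)^2$ is exactly the Fourier symbol of $-\mathcal{L}$ with $\mathcal{L}$ as in (\ref{diff oper}) with $Q$ constant (since the $\sigma_k$ are constant), so $\sigma^2(\xi)\int_0^t e(s,\xi)\,d\gamma(s)$ paired with $\widehat\varphi$ and Fubini'd (now also using the Fubini–Tonelli interchange against the finite measure $d\gamma$ on $[0,t]$, legitimate because $\gamma$ has bounded variation by Assumption A(ii)) yields $-\int_0^t\langle\mathcal{L}\overline{\theta}(s),\varphi\rangle\,d\gamma(s)$. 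Assembling, $\langle\overline{\theta}(t),\varphi\rangle=\langle\theta_0,\varphi\rangle+\int_0^t\kappa\langle\Delta\overline{\theta}(s),\varphi\rangle\,ds+\int_0^t\langle\mathcal{L}\overline{\theta}(s),\varphi\rangle\,d\gamma(s)$ for all test $\varphi$, which is precisely the claimed distributional equation.

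The main obstacle I anticipate is the careful justification of the Fubini interchanges and of the claim that the $\xi$-integrals define honest distributions in $x$ — in particular making sure the integrand $\sigma^2(\xi)e(s,\xi)\widehat\varphi(\xi)$ is jointly integrable in $(s,\xi)$ against $ds\otimes d\gamma$, which is where Assumption B(ii) (the bound $d\gamma/ds\le C$) is actually convenient though strictly one only needs finite total variation of $\gamma$ on $[0,T]$ from Assumption A; the genuinely essential role of Assumption B is rather to guarantee, as flagged in Remark \ref{Remark about singularity}, that $e(t,\cdot)$ does not grow in $\xi$ so that $\overline{\theta}$ is a bona fide $L^2$ function and not merely a tempered distribution. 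A secondary technical point is verifying $\mathcal{V}_\epsilon(\epsilon)\to 0$ was already handled in Lemma \ref{lemma1}, so the passage $e_\epsilon\to e$ and the limiting equation are taken as given; I would simply cite the corollary preceding this statement rather than redo that limit.
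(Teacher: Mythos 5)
Your proposal is correct and follows essentially the same route as the paper: the sign conditions of Assumption B give the pointwise bound $\left\vert e\left(t,\xi\right)\right\vert \leq \left\vert \widehat{\theta_{0}}\left(\xi\right)\right\vert$, one inverts the Fourier transform, and the equation is identified by recognizing $-\sigma^{2}\left(\xi\right)$ as the symbol of $\mathcal{L}=\operatorname{div}\left(Q\nabla\cdot\right)$ with constant $Q=\sum_{k}\sigma_{k}\otimes\sigma_{k}$. You merely spell out the routine steps (weak continuity by dominated convergence, the Fubini interchanges against $ds$ and the finite measure $d\gamma$) that the paper leaves implicit.
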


The only part of the statement we have to clarify is the form of $\mathcal{L}%
$. Until now it is%

\[
\widehat{\left(  \mathcal{L}f\right)  }\left(  \xi\right)  =-\sigma^{2}\left(
\xi\right)  \widehat{f}\left(  \xi\right)  .
\]
Then%
\begin{align*}
\left(  \mathcal{L}f\right)  \left(  x\right)   &  =\sum_{k\in K}%
\int_{\mathbb{R}^{d}}e^{2\pi i\xi\cdot x}\left(  i\sigma_{k}\cdot\xi\right)
\left(  i\sigma_{k}\cdot\xi\right)  \widehat{f}\left(  \xi\right)  d\xi\\
&  =\sum_{k\in K}\left(  \sigma_{k}\cdot\nabla\right)  \left(  \sigma_{k}%
\cdot\nabla f\left(  x\right)  \right) \\
&  =\sum_{k\in K}\operatorname{div}\left(  \left(  \sigma_{k}\otimes\sigma
_{k}\right)  \nabla f\left(  x\right)  \right) \\
&  =\operatorname{div}\left(  Q\nabla f\left(  x\right)  \right)  ,
\end{align*}
where $Q=\sum_{k\in K}\left(  \sigma_{k}\otimes\sigma_{k}\right)  $.

\begin{remark}
Assume, for instance, that $N=2$, $K=\left\{  1,2\right\}  $, $e_{1},e_{1}$
canonical basis of $\mathbb{R}^{2}$,
\[
\sigma_{k}=\sqrt{\kappa_{T}}e_{1},
\]
where $\kappa_{T}>0$ is a constant (with the physical meaning of turbulent
kinetic energy). Then
\begin{align*}
\sigma^{2}\left(  \xi\right)   &  =\kappa_{T}\left\vert \xi\right\vert ^{2}\\
\left(  \mathcal{L}f\right)  \left(  x\right)   &  =\kappa_{T}\Delta f\left(
x\right)
\end{align*}
and the equations take the form%
\[
e\left(  t,\xi\right)  =\widehat{\theta_{0}}\left(  \xi\right)  -\kappa
\left\vert \xi\right\vert ^{2}\int_{0}^{t}e\left(  s,\xi\right)  ds-\kappa
_{T}\left\vert \xi\right\vert ^{2}\int_{0}^{t}e\left(  s,\xi\right)
d\gamma\left(  s\right)
\]%
\[
\overline{\theta}\left(  t,x\right)  =\theta_{0}\left(  x\right)  +\int%
_{0}^{t}\kappa\Delta\overline{\theta}\left(  s,x\right)  ds+\int_{0}^{t}%
\kappa_{T}\Delta\overline{\theta}\left(  s,x\right)  d\gamma\left(  s\right)
.
\]
The dissipation $\kappa\Delta\overline{\theta}\left(  s,x\right)  $ is
enhanced by the term $\kappa_{T}\Delta\overline{\theta}\left(  s,x\right)
d\gamma\left(  s\right)  $, on average. However, compared to the Brownian case
$H=1/2$, where%
\[
\overline{\theta}\left(  t,x\right)  =\theta_{0}\left(  x\right)  +\int%
_{0}^{t}\left(  \kappa+\kappa_{T}\right)  \Delta\overline{\theta}\left(
s,x\right)  ds,
\]
the case when $H>1/2$ is slower for short times, because $d\gamma\left(
s\right)  \sim s^{H-\frac{1}{2}}ds$ which is infinitesimal for small $s$.
Positively correlated noise decreases the dissipation power with respect to
the incorrelated case, which is constituted by Gaussian white noise.
\end{remark}

We go on investigating the variance in closed form.

\subsection{Variance-covariance of the solution}

In order to evaluate the variance of the solution one needs to understand the
covariance structure of $\left(  \widehat{\theta_{\epsilon}}\left(
t,\xi\right)  \right)  _{\xi\in\mathbb{R}^{2}}$. Indeed, if $\widehat{\theta
_{\epsilon}}$ is the solution of equation (\ref{F 1}) then%
\[
\theta_{\epsilon}\left(  t,x\right)  -\mathbb{E}\left[  \theta_{\epsilon
}\left(  t,x\right)  \right]  =\int e^{2\pi i\xi\cdot x}\left(
\widehat{\theta_{\epsilon}}\left(  t,\xi\right)  -e_{\epsilon}\left(
t,\xi\right)  \right)  d\xi
\]%
\begin{align*}
Var\left(  \theta_{\epsilon}\left(  t,x\right)  \right)   &  =\mathbb{E}%
\left[  \left(  \theta_{\epsilon}\left(  t,x\right)  -\mathbb{E}\left[
\theta_{\epsilon}\left(  t,x\right)  \right]  \right)  \overline{\left(
\theta_{\epsilon}\left(  t,x\right)  -\mathbb{E}\left[  \theta_{\epsilon
}\left(  t,x\right)  \right]  \right)  }\right] \\
&  =\int\int e^{2\pi i\left(  \xi-\eta\right)  \cdot x}C_{\epsilon}\left(
t,\xi,\eta\right)  d\xi d\eta,
\end{align*}
where $C_{\epsilon}\left(  t,\xi,\eta\right)  $ is the covariance function%
\[
C_{\epsilon}\left(  t,\xi,\eta\right)  =\mathbb{E}\left[  \left(  \left(
\widehat{\theta_{\epsilon}}\left(  t,\xi\right)  -e_{\epsilon}\left(
t,\xi\right)  \right)  \right)  \left(  \overline{\widehat{\theta_{\epsilon}%
}\left(  t,\eta\right)  -e_{\epsilon}\left(  t,\eta\right)  }\right)  \right]
.
\]
We come back to equations (\ref{F 2}) and (\ref{F 12}). We set%
\[
\widetilde{\theta}_{\epsilon}\left(  t,\xi\right)  =\widehat{\theta_{\epsilon
}}\left(  t,\xi\right)  -e_{\epsilon}\left(  t,\xi\right)
\]
and have%
\begin{align*}
\widetilde{\theta}_{\epsilon}\left(  t,\xi\right)   &  =-\kappa\left\vert
\xi\right\vert ^{2}\int_{0}^{t}\widetilde{\theta}_{\epsilon}\left(
s,\xi\right)  ds\\
&  +i\sum_{k\in K}\left(  \sigma_{k}\cdot\xi\right)  \int_{0}^{t}%
\widetilde{\theta}_{\epsilon}\left(  s,\xi\right)  \frac{G_{s+\epsilon}%
^{k}-G_{(s-\epsilon)_{+}}^{k}}{2\epsilon}ds\\
&  +i\sum_{k\in K}\left(  \sigma_{k}\cdot\xi\right)  \int_{0}^{t}e_{\epsilon
}\left(  s,\xi\right)  \frac{G_{s+\epsilon}^{k}-G_{(s-\epsilon)_{+}}^{k}%
}{2\epsilon}ds\\
&  -\sigma^{2}\left(  \xi\right)  \int_{0}^{t}e_{\epsilon}\left(
s,\xi\right)  d\mathcal{V}_{\epsilon}\left(  s\right) .
\end{align*}
Therefore%
\begin{align*}
&  \widetilde{\theta}_{\epsilon}\left(  t,\xi\right)  \overline
{\widetilde{\theta}_{\epsilon}\left(  t,\eta\right)  }\\
&  =\int_{0}^{t}\widetilde{\theta}_{\epsilon}\left(  ds,\xi\right)
\overline{\widetilde{\theta}_{\epsilon}\left(  s,\eta\right)  }+\int_{0}%
^{t}\widetilde{\theta}_{\epsilon}\left(  s,\xi\right)  \overline
{\widetilde{\theta}_{\epsilon}\left(  ds,\eta\right)  }%
\end{align*}%
\begin{align*}
&  =-\kappa\left(  \left\vert \xi\right\vert ^{2}+\left\vert \eta\right\vert
^{2}\right)  \int_{0}^{t}\widetilde{\theta}_{\epsilon}\left(  s,\xi\right)
\overline{\widetilde{\theta}_{\epsilon}\left(  s,\eta\right)  }ds\\
&  +i\sum_{k\in K}\left(  \sigma_{k}\cdot\left(  \xi-\eta\right)  \right)
\int_{0}^{t}\widetilde{\theta}_{\epsilon}\left(  s,\xi\right)  \overline
{\widetilde{\theta}_{\epsilon}\left(  s,\eta\right)  }\frac{G_{s+\epsilon}%
^{k}-G_{(s-\epsilon)_{+}}^{k}}{2\epsilon}ds\\
&  +i\sum_{k\in K}\left(  \sigma_{k}\cdot\xi\right)  \int_{0}^{t}e_{\epsilon
}\left(  s,\xi\right)  \overline{\widetilde{\theta}_{\epsilon}\left(
s,\eta\right)  }\frac{G_{s+\epsilon}^{k}-G_{(s-\epsilon)_{+}}^{k}}{2\epsilon
}ds\\
&  -i\sum_{k\in K}\left(  \sigma_{k}\cdot\eta\right)  \int_{0}^{t}%
\widetilde{\theta}_{\epsilon}\left(  s,\xi\right)  \overline{e_{\epsilon
}\left(  s,\eta\right)  }\frac{G_{s+\epsilon}^{k}-G_{(s-\epsilon)_{+}}^{k}%
}{2\epsilon}ds\\
&  -\sigma^{2}\left(  \xi\right)  \int_{0}^{t}e_{\epsilon}\left(
s,\xi\right)  \overline{\widetilde{\theta}_{\epsilon}\left(  s,\eta\right)
}d\mathcal{V}_{\epsilon}\left(  s\right) \\
&  -\sigma^{2}\left(  \eta\right)  \int_{0}^{t}\widetilde{\theta}_{\epsilon
}\left(  s,\xi\right)  \overline{e_{\epsilon}\left(  s,\eta\right)
}d\mathcal{V}_{\epsilon}\left(  s\right) .
\end{align*}
So setting%
\[
R_{\epsilon}\left(  t,\xi,\eta\right)  =\widetilde{\theta}_{\epsilon}\left(
t,\xi\right)  \overline{\widetilde{\theta}_{\epsilon}\left(  t,\eta\right)
},
\]
we get%
\begin{align*}
R_{\epsilon}\left(  t,\xi,\eta\right)   &  =-\kappa\left(  \left\vert
\xi\right\vert ^{2}+\left\vert \eta\right\vert ^{2}\right)  \int_{0}%
^{t}R_{\epsilon}\left(  s,\xi,\eta\right)  ds\\
&  +i\sum_{k\in K}\left(  \sigma_{k}\cdot\left(  \xi-\eta\right)  \right)
\int_{0}^{t}R_{\epsilon}\left(  s,\xi,\eta\right)  \frac{G_{s+\epsilon}%
^{k}-G_{(s-\epsilon)_{+}}^{k}}{2\epsilon}ds\\
&  +i\sum_{k\in K}\left(  \sigma_{k}\cdot\xi\right)  \int_{0}^{t}e_{\epsilon
}\left(  s,\xi\right)  \overline{\widetilde{\theta}_{\epsilon}\left(
s,\eta\right)  }\frac{G_{s+\epsilon}^{k}-G_{(s-\epsilon)_{+}}^{k}}{2\epsilon
}ds\\
&  -i\sum_{k\in K}\left(  \sigma_{k}\cdot\eta\right)  \int_{0}^{t}%
\widetilde{\theta}_{\epsilon}\left(  s,\xi\right)  \overline{e_{\epsilon
}\left(  s,\eta\right)  }\frac{G_{s+\epsilon}^{k}-G_{(s-\epsilon)_{+}}^{k}%
}{2\epsilon}ds\\
&  +M_{t},
\end{align*}
where $M$ is a mean zero process.

Now we need to express the three terms on the right-hand-side of this identity
which involve the noise by means of mean zero processes plus a trace. Let us
treat each one of them. Denoting again by $M$ a generic mean zero process, we
re-express the first one of the previous terms, using in particular
(\ref{basic formula}), (\ref{F Malliavin}) and (\ref{F 3}):%
\[
\int_{0}^{t}R_{\epsilon}\left(  s,\xi,\eta\right)  \frac{G_{s+\epsilon}%
^{k}-G_{(s-\epsilon)_{+}}^{k}}{2\epsilon}ds=M_{t}+\int_{0}^{t}\left\langle
D_{\cdot}^{\left(  k\right)  }R_{\epsilon}\left(  s,\xi,\eta\right)  ,\frac
{1}{2\epsilon}1_{\left[  (s-\epsilon)_{+},s+\epsilon\right]  }\right\rangle
_{\mathcal{H}}ds.
\]
From (\ref{F Malliavin}),
\begin{align*}
D_{r}^{\left(  k\right)  }R_{\epsilon}\left(  t,\xi,\eta\right)   &
=D_{r}^{\left(  k\right)  }\widehat{\theta_{\epsilon}}\left(  t,\xi\right)
\cdot\overline{\widetilde{\theta}_{\epsilon}\left(  t,\eta\right)
}+\widetilde{\theta}_{\epsilon}\left(  t,\xi\right)  \cdot\overline
{D_{r}^{\left(  k\right)  }\widehat{\theta_{\epsilon}}\left(  t,\eta\right)
}\\
&  =\widehat{\theta_{\epsilon}}\left(  t,\xi\right)  \overline
{\widetilde{\theta}_{\epsilon}\left(  t,\eta\right)  }i\left(  \sigma_{k}%
\cdot\xi\right)  \frac{1}{2\epsilon}\int_{0}^{t}1_{\left[  (s-\epsilon
)_{+},s+\epsilon\right]  }\left(  r\right)  ds\\
&  -\widetilde{\theta}_{\epsilon}\left(  t,\xi\right)  \overline
{\widehat{\theta_{\epsilon}}\left(  t,\eta\right)  }i\left(  \sigma_{k}%
\cdot\eta\right)  \frac{1}{2\epsilon}\int_{0}^{t}1_{\left[  (s-\epsilon
)_{+},s+\epsilon\right]  }\left(  r\right)  ds\\
&  =M_{t}+R_{\epsilon}\left(  t,\xi,\eta\right)  i\left(  \sigma_{k}%
\cdot\left(  \xi-\eta\right)  \right)  \frac{1}{2\epsilon}\int_{0}%
^{t}1_{\left[  (s-\epsilon)_{+},s+\epsilon\right]  }\left(  r\right)  ds
\end{align*}
and thus%
\[
\int_{0}^{t}\left\langle D_{\cdot}^{\left(  k\right)  }R_{\epsilon}\left(
s,\xi,\eta\right)  ,1_{\left[  (s-\epsilon)_{+},s+\epsilon\right]
}\right\rangle _{\mathcal{H}}ds=M_{t}+i\left(  \sigma_{k}\cdot\left(  \xi
-\eta\right)  \right)  \int_{0}^{t}R_{\epsilon}\left(  s,\xi,\eta\right)
d\mathcal{V}_{\epsilon}\left(  s\right) .
\]
Concerning the third one of the previous terms we have%

\begin{align*}
&  \int_{0}^{t}\widetilde{\theta}_{\epsilon}\left(  s,\xi\right)
\overline{e_{\epsilon}\left(  s,\eta\right)  }\frac{G_{s+\epsilon}%
^{k}-G_{(s-\epsilon)_{+}}^{k}}{2\epsilon}ds\\
&  =M_{t}+\int_{0}^{t}\overline{e_{\epsilon}\left(  s,\eta\right)
}\left\langle D_{\cdot}^{\left(  k\right)  }\widehat{\theta_{\epsilon}}\left(
s,\xi\right)  ,\frac{1}{2\epsilon}1_{\left[  (s-\epsilon)_{+},s+\epsilon
\right]  }\right\rangle _{\mathcal{H}}ds\\
&  =M_{t}+i\left(  \sigma_{k}\cdot\xi\right)  \int_{0}^{t}\widehat{\theta
_{\epsilon}}\left(  s,\xi\right)  \overline{e_{\epsilon}\left(  s,\eta\right)
}\frac{1}{\left(  2\epsilon\right)  ^{2}}\int_{0}^{s}\left\langle 1_{\left[
(r-\epsilon)_{+},r+\epsilon\right]  },1_{\left[  (s-\epsilon)_{+}%
,s+\epsilon\right]  }\right\rangle _{\mathcal{H}}drds\\
&  =M_{t}+i\left(  \sigma_{k}\cdot\xi\right)  \int_{0}^{t}\widehat{\theta
_{\epsilon}}\left(  s,\xi\right)  \overline{e_{\epsilon}\left(  s,\eta\right)
}d\mathcal{V}_{\epsilon}\left(  s\right),
\end{align*}
hence%
\begin{align*}
&  -i\sum_{k\in K}\left(  \sigma_{k}\cdot\eta\right)  \int_{0}^{t}%
\widetilde{\theta}_{\epsilon}\left(  s,\xi\right)  \overline{e_{\epsilon
}\left(  s,\eta\right)  }\frac{G_{s+\epsilon}^{k}-G_{(s-\epsilon)_{+}}^{k}%
}{2\epsilon}ds\\
&  =M_{t}+\sum_{k\in K}\left(  \sigma_{k}\cdot\xi\right)  \left(  \sigma
_{k}\cdot\eta\right)  \int_{0}^{t}\widehat{\theta}_{\epsilon}\left(
s,\xi\right)  \overline{e_{\epsilon}\left(  s,\eta\right)  }d\mathcal{V}%
_{\epsilon}\left(  s\right)  .
\end{align*}
Similarly, concerning the second one of the terms,
\[
\int_{0}^{t}e_{\epsilon}\left(  s,\xi\right)  \overline{\widetilde{\theta
}_{\epsilon}\left(  s,\eta\right)  }\frac{G_{s+\epsilon}^{k}-G_{(s-\epsilon
)_{+}}^{k}}{2\epsilon}ds=M_{t}-i\left(  \sigma_{k}\cdot\eta\right)  \int%
_{0}^{t}e_{\epsilon}\left(  s,\xi\right)  \overline{\widehat{\theta}%
_{\epsilon}\left(  s,\eta\right)  }d\mathcal{V}_{\epsilon}\left(  s\right)
\]
and therefore%
\begin{align*}
&  i\sum_{k\in K}\left(  \sigma_{k}\cdot\xi\right)  \int_{0}^{t}e_{\epsilon
}\left(  s,\xi\right)  \overline{\widetilde{\theta}_{\epsilon}\left(
s,\eta\right)  }\frac{G_{s+\epsilon}^{k}-G_{(s-\epsilon)_{+}}^{k}}{2\epsilon
}ds\\
&  =M_{t}+\sum_{k\in K}\left(  \sigma_{k}\cdot\xi\right)  \left(  \sigma
_{k}\cdot\eta\right)  \int_{0}^{t}e_{\epsilon}\left(  s,\xi\right)
\overline{\widehat{\theta_{\epsilon}}\left(  s,\eta\right)  }d\mathcal{V}%
_{\epsilon}\left(  s\right) .
\end{align*}
Summarizing the previous identities, we get%
\begin{align*}
R_{\epsilon}\left(  t,\xi,\eta\right)   &  =-\kappa\left(  \left\vert
\xi\right\vert ^{2}+\left\vert \eta\right\vert ^{2}\right)  \int_{0}%
^{t}R_{\epsilon}\left(  s,\xi,\eta\right)  ds\\
&  -\sum_{k\in K}\left(  \sigma_{k}\cdot\left(  \xi-\eta\right)  \right)
^{2}\int_{0}^{t}R_{\epsilon}\left(  s,\xi,\eta\right)  d\mathcal{V}_{\epsilon
}\left(  s\right) \\
&  +\sum_{k\in K}\left(  \sigma_{k}\cdot\xi\right)  \left(  \sigma_{k}%
\cdot\eta\right)  \int_{0}^{t}e_{\epsilon}\left(  s,\xi\right)  \overline
{\widehat{\theta_{\epsilon}}\left(  s,\eta\right)  }d\mathcal{V}_{\epsilon
}\left(  s\right) \\
&  +\sum_{k\in K}\left(  \sigma_{k}\cdot\xi\right)  \left(  \sigma_{k}%
\cdot\eta\right)  \int_{0}^{t}\widehat{\theta_{\epsilon}}\left(  s,\xi\right)
\overline{e_{\epsilon}\left(  s,\eta\right)  }d\mathcal{V}_{\epsilon}\left(
s\right) \\
&  +M_{t}.
\end{align*}
Taking expectation (recall $C_{\epsilon}\left(  t,\xi,\eta\right)
=\mathbb{E}\left[  R_{\epsilon}\left(  t,\xi,\eta\right)  \right]  $), we get%
\begin{align*}
C_{\epsilon}\left(  t,\xi,\eta\right)   &  =-\kappa\left(  \left\vert
\xi\right\vert ^{2}+\left\vert \eta\right\vert ^{2}\right)  \int_{0}%
^{t}C_{\epsilon}\left(  s,\xi,\eta\right)  ds\\
&  -\sum_{k\in K}\left(  \sigma_{k}\cdot\left(  \xi-\eta\right)  \right)
^{2}\int_{0}^{t}C_{\epsilon}\left(  s,\xi,\eta\right)  d\mathcal{V}_{\epsilon
}\left(  s\right) \\
&  +2\sum_{k\in K}\left(  \sigma_{k}\cdot\xi\right)  \left(  \sigma_{k}%
\cdot\eta\right)  \int_{0}^{t}e_{\epsilon}\left(  s,\xi\right)  \overline
{e_{\epsilon}\left(  s,\eta\right)  }d\mathcal{V}_{\epsilon}\left(  s\right)
.
\end{align*}
Let us introduce the notation%
\[
\rho\left(  \xi,\eta\right)  :=\sum_{k\in K}\left(  \sigma_{k}\cdot\xi\right)
\left(  \sigma_{k}\cdot\eta\right)  .
\]
Taking the limit (it existence is like above for $e\left(  t,\xi\right)  $)
\[
C\left(  t,\xi,\eta\right)  :=\lim_{\epsilon\rightarrow0}C_{\epsilon}\left(
t,\xi,\eta\right)  ,
\]
from Lemma \ref{lemma1}, we establish the following.

\begin{proposition}
\label{Proposition variance Fourier}Suppose Assumption A. Then the function
$C\left(  t,\xi,\eta\right)  $ satisfies, together with $e$, the identity
\begin{align*}
C\left(  t,\xi,\eta\right)   &  =-\kappa\left(  \left\vert \xi\right\vert
^{2}+\left\vert \eta\right\vert ^{2}\right)  \int_{0}^{t}C\left(  s,\xi
,\eta\right)  ds\\
&  -\sigma^{2}\left(  \xi-\eta\right)  \int_{0}^{t}C\left(  s,\xi,\eta\right)
d\gamma\left(  s\right) \\
&  +2\rho\left(  \xi,\eta\right)  \int_{0}^{t}e\left(  s,\xi\right)
\overline{e\left(  s,\eta\right)  }d\gamma\left(  s\right) .
\end{align*}

\end{proposition}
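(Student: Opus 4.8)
The plan is to obtain the claimed identity by passing to the limit $\epsilon\to0$ in the integral equation for $C_\epsilon(t,\xi,\eta)$ derived just above the statement, in exact parallel with the passage from (\ref{F 12}) to the closed equation satisfied by $e(t,\xi)$. Two things must be supplied: first, that the limit $C(t,\xi,\eta)=\lim_{\epsilon\to0}C_\epsilon(t,\xi,\eta)$ exists and that the convergences $C_\epsilon(\cdot,\xi,\eta)\to C(\cdot,\xi,\eta)$ and $e_\epsilon(\cdot,\xi)\to e(\cdot,\xi)$ hold \emph{uniformly} on $[0,T]$, with limits continuous in $t$; second, that each of the three integral terms on the right converges to the term obtained by replacing $d\mathcal{V}_\epsilon$ with $d\gamma$ and $C_\epsilon,e_\epsilon$ with $C,e$.

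For the first point I would argue as was done for $e$, using the explicit representation (\ref{F explicit}). Since $C_\epsilon(t,\xi,\eta)=\mathbb{E}\left[\widehat{\theta_\epsilon}(t,\xi)\overline{\widehat{\theta_\epsilon}(t,\eta)}\right]-e_\epsilon(t,\xi)\overline{e_\epsilon(t,\eta)}$ and the processes $G^k$ are independent, centred and Gaussian, the Gaussian characteristic function applied to (\ref{F explicit}) gives closed expressions for $C_\epsilon(t,\xi,\eta)$ and for $e_\epsilon(t,\xi)$ in terms of $\widehat{\theta_0}$ and of the single scalar $Var(\mathcal{G}_t^{1,\epsilon})$, which is a fixed multiple of $\mathcal{V}_\epsilon(t)$. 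By the proof of Lemma \ref{lemma1} --- which gives the pointwise convergence of $\mathcal{V}_\epsilon$ together with a uniform bound on its total variation on $[0,T]$ --- the functions $\mathcal{V}_\epsilon$ converge uniformly on $[0,T]$ to a continuous limit (a standard consequence of pointwise convergence to a continuous function together with uniformly bounded variation). Since, for fixed $(\xi,\eta)$, those closed expressions depend on $\mathcal{V}_\epsilon(t)$ in a way that is locally Lipschitz uniformly in $t\in[0,T]$, uniform convergence of $C_\epsilon$ and $e_\epsilon$ with continuous limits follows. (Alternatively the same conclusion can be reached by a Gronwall-type stability estimate on the linear integral equation for $C_\epsilon$, using the uniform total-variation bound from Lemma \ref{lemma1}.)

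For the second point, $\int_0^t C_\epsilon(s,\xi,\eta)\,ds\to\int_0^t C(s,\xi,\eta)\,ds$ is immediate from uniform convergence on $[0,t]$. For the two Stieltjes terms I would use the splitting $\int_0^t f_\epsilon\,d\mathcal{V}_\epsilon=\int_0^t f\,d\mathcal{V}_\epsilon+\int_0^t(f_\epsilon-f)\,d\mathcal{V}_\epsilon$, with $f_\epsilon=C_\epsilon(\cdot,\xi,\eta),\ f=C(\cdot,\xi,\eta)$ in one case and $f_\epsilon=e_\epsilon(\cdot,\xi)\overline{e_\epsilon(\cdot,\eta)},\ f=e(\cdot,\xi)\overline{e(\cdot,\eta)}$ in the other: the first piece tends to $\int_0^t f\,d\gamma$ by the weak star convergence $d\mathcal{V}_\epsilon\to d\gamma$ of Lemma \ref{lemma1} applied to the continuous function $f$ (legitimate on $[0,t]$, since $\gamma$ is continuous and hence $d\gamma$ has no atom at $t$), while the second piece is bounded by the total variation of $\mathcal{V}_\epsilon$ on $[0,t]$ times $\sup_{[0,t]}|f_\epsilon-f|$ and tends to $0$ by the uniform bound of Lemma \ref{lemma1} and the uniform convergence of the first point. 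Collecting the three limits, and recalling $\sum_{k\in K}(\sigma_k\cdot(\xi-\eta))^2=\sigma^2(\xi-\eta)$ and $\sum_{k\in K}(\sigma_k\cdot\xi)(\sigma_k\cdot\eta)=\rho(\xi,\eta)$, gives the stated identity.

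The only step calling for genuine care is the uniform (not merely pointwise) convergence $C_\epsilon\to C$ on bounded time intervals: it is precisely this uniformity that lets the weak star convergence of $d\mathcal{V}_\epsilon$ be combined with the $\epsilon$-dependence of the integrands. I expect no further obstacle. As an independent check, once the closed formula for $C$ is available from the first point one can verify the identity by direct differentiation, using the chain rule $\psi(\gamma(t))-\psi(\gamma(0))=\int_0^t\psi'(\gamma(s))\,d\gamma(s)$ for $\psi\in C^1$ and $\gamma$ continuous of bounded variation; the algebraic fact making it close is $\sigma^2(\xi-\eta)+2\rho(\xi,\eta)=\sigma^2(\xi)+\sigma^2(\eta)$.
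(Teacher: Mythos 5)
Your proposal is correct and takes essentially the same route as the paper: the paper also just passes to the limit in the $\epsilon$-level identity for $C_{\epsilon}$ derived before the statement, invoking the explicit formula (\ref{F explicit}) for the existence of the limits and Lemma \ref{lemma1} for $d\mathcal{V}_{\epsilon}\rightarrow d\gamma$, only without spelling out the uniform-convergence and splitting details you supply. One caveat: your parenthetical principle that pointwise convergence to a continuous limit together with a uniform variation bound yields uniform convergence is false in general (sliding bumps of fixed height and shrinking support give a counterexample), but the uniform convergence of $\mathcal{V}_{\epsilon}$ on $[0,T]$ that you need does hold and follows directly from stationarity of the increments and continuity of $\gamma$ at $0$ (or via your alternative Gronwall-type stability argument), so the proof is completed as you intend.
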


Let us go back to the computation of the function
\[
V\left(  t,x\right)  =\int\int e^{2\pi i\left(  \xi-\eta\right)  \cdot
x}C\left(  t,\xi,\eta\right)  d\xi d\eta,
\]
which is limit of $Var\left(  \theta_{\epsilon}\left(  t,x\right)  \right)  $.

\begin{proposition}
Suppose Assumptions A and B. Then, in the sense of distributions,%
\[
V\left(  t,x\right)  =\int_{0}^{t}2\kappa\Delta V\left(  s,x\right)
ds+\int_{0}^{t}\left(  \mathcal{L}V\left(  s\right)  \right)  \left(
x\right)  d\gamma\left(  s\right)  +2\sum_{k\in K}\int_{0}^{t}\left(  \left(
\sigma_{k}\cdot\nabla\right)  \overline{\theta}\left(  s,x\right)  \right)
^{2}d\gamma\left(  s\right) .
\]

\end{proposition}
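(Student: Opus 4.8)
\emph{Plan of proof.} The idea is to deduce the equation for $V$ by taking the double inverse Fourier transform, in the pair $(\xi,\eta)$, of the closed identity for $C(t,\xi,\eta)$ given by Proposition~\ref{Proposition variance Fourier}, in exact analogy with the way the equation for $\overline{\theta}$ was obtained from the identity for $e(t,\xi)$. A few preliminaries make this legitimate. Since $\theta_{0}$ is deterministic and the fields $\sigma_{k}$ and the processes $G^{k}$ are real, the solution $\theta_{\epsilon}$ and the limit $\overline{\theta}$ are real-valued; hence $\overline{e(t,\eta)}=e(t,-\eta)$, one has the symmetries $C(t,\eta,\xi)=\overline{C(t,\xi,\eta)}=C(t,-\xi,-\eta)$, and $V(t,x)=\int\int e^{2\pi i(\xi-\eta)\cdot x}\,C(t,\xi,\eta)\,d\xi\,d\eta$ is real. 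From the explicit formula (\ref{F explicit}) we have $|\widehat{\theta_{\epsilon}}(t,\xi)|\le|\widehat{\theta_{0}}(\xi)|$ and $|e_{\epsilon}(t,\xi)|\le|\widehat{\theta_{0}}(\xi)|$, whence the uniform bound $|C_{\epsilon}(t,\xi,\eta)|\le 4\,|\widehat{\theta_{0}}(\xi)|\,|\widehat{\theta_{0}}(\eta)|$, which passes to the limit $C$; this is what makes $V(t,\cdot)$ a well-defined tempered distribution, justifies interchanging the $d\xi\,d\eta$ integration with the $\epsilon\to0$ limit and with the other integrals, and lets the inversion be carried out term by term (by testing against Schwartz functions and using dominated convergence and Fubini). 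Finally, $V(0,\cdot)=0$, consistent with the absence of an initial term in the claimed identity.

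I would then invert the three terms on the right-hand side of Proposition~\ref{Proposition variance Fourier} separately. For the source term the point is that $\rho(\xi,\eta)=\sum_{k}(\sigma_{k}\cdot\xi)(\sigma_{k}\cdot\eta)$ factorises, and that $\int e^{2\pi i\xi\cdot x}(\sigma_{k}\cdot\xi)\,e(s,\xi)\,d\xi$ is a fixed multiple of $(\sigma_{k}\cdot\nabla)\overline{\theta}(s,x)$ while the corresponding $\eta$-integral is its complex conjugate; since $(\sigma_{k}\cdot\nabla)\overline{\theta}$ is real, their product equals $\big((\sigma_{k}\cdot\nabla)\overline{\theta}(s,x)\big)^{2}$ up to a constant fixed by the Fourier normalisation, so that $2\rho(\xi,\eta)\,e(s,\xi)\overline{e(s,\eta)}$ inverts to $2\sum_{k}\big((\sigma_{k}\cdot\nabla)\overline{\theta}(s,x)\big)^{2}$ --- it is precisely here that reality of the solution is indispensable, otherwise a modulus or spurious cross terms would survive. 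For the ``$\mathcal{L}V$'' term, $i(\sigma_{k}\cdot(\xi-\eta))$ is the $x$-symbol of $(\sigma_{k}\cdot\nabla)$ acting on $e^{2\pi i(\xi-\eta)\cdot x}$, so applying $\mathcal{L}=\sum_{k}(\sigma_{k}\cdot\nabla)^{2}$ to $V$ reproduces $-\int\int\sigma^{2}(\xi-\eta)\,e^{2\pi i(\xi-\eta)\cdot x}\,C\,d\xi\,d\eta$, whence the second term of the identity inverts to $\int_{0}^{t}(\mathcal{L}V(s))(x)\,d\gamma(s)$.

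The diffusive term $-\kappa(|\xi|^{2}+|\eta|^{2})\int_{0}^{t}C(s,\xi,\eta)\,ds$ is the delicate one, and in my view the crux of the argument. Here I would write $|\xi|^{2}+|\eta|^{2}=|\xi-\eta|^{2}+2\,\xi\cdot\eta$: the $|\xi-\eta|^{2}$ part inverts at once to (a multiple of) $\kappa\Delta V$, since $-|\xi-\eta|^{2}$ is the $x$-symbol of $\Delta$ on $e^{2\pi i(\xi-\eta)\cdot x}$, whereas the cross contribution $2\,\xi\cdot\eta$ is the part whose inversion has to be analysed carefully to reach the announced coefficient $2\kappa\Delta V$; controlling it --- keeping strict track of the Fourier normalisation used throughout, of the symmetries of $C$ noted above, and of the interplay with the mean-field equation for $\overline{\theta}$ --- is where I expect the main work, and the main pitfalls, to be. Assembling the three inverted terms then yields the stated identity in the sense of distributions, Assumption~B being what guarantees (via $d\gamma(s)/ds$ non-negative and bounded) that the inverted objects $\mathcal{L}V$ and the source term are as claimed, and integrability in time of $V(\cdot,x)$ --- needed for the $\int_{0}^{t}\cdot\,ds$ and $\int_{0}^{t}\cdot\,d\gamma(s)$ terms to be meaningful --- follows from the same uniform bound on $C$ used earlier for $\overline{\theta}$.
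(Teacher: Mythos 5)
Your preliminaries (reality of $\overline{\theta}$, the uniform bound $|C_{\epsilon}(t,\xi,\eta)|\le 4\,|\widehat{\theta_{0}}(\xi)|\,|\widehat{\theta_{0}}(\eta)|$ justifying Fubini and the passage to the limit) and your inversion of the second and third terms of Proposition \ref{Proposition variance Fourier} are correct and coincide with the paper's own proof: there too the term with $\sigma^{2}(\xi-\eta)$ is treated through the variable $\xi'=\xi-\eta$ (equivalently, the symbol of $\mathcal{L}$ acting on $e^{2\pi i(\xi-\eta)\cdot x}$), and the source term through the factorisation of $\rho(\xi,\eta)$ together with the reality of $(\sigma_{k}\cdot\nabla)\overline{\theta}$. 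The genuine gap is the diffusive term, which you explicitly leave open ("where I expect the main work to be"); a proof cannot defer its crux. Moreover, your instinct that this is the dangerous point is well founded, because the route you sketch cannot produce the announced coefficient. In the splitting $|\xi|^{2}+|\eta|^{2}=|\xi-\eta|^{2}+2\,\xi\cdot\eta$, the first piece does invert to (a multiple of) $-\Delta V$, but the cross piece does not invert to anything expressible through $V$: since $\int e^{2\pi i\xi\cdot x}(2\pi i\xi)\widetilde{\theta}_{\epsilon}(s,\xi)\,d\xi=\nabla\bigl(\theta_{\epsilon}-\mathbb{E}\theta_{\epsilon}\bigr)(s,x)$ and the $\eta$-integral gives its complex conjugate, while $(2\pi i\xi)\cdot\overline{(2\pi i\eta)}$ is proportional to $\xi\cdot\eta$, the cross contribution is a positive multiple of $\mathbb{E}\bigl[\,|\nabla(\theta_{\epsilon}-\mathbb{E}\theta_{\epsilon})(s,x)|^{2}\bigr]$ (in the limit). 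This mean squared gradient of the fluctuation is nonnegative, is not a functional of $V$, and is not $-\tfrac12\Delta V$ in general, so no bookkeeping of normalisations or symmetries of $C$ can turn it into the missing $\kappa\Delta V$.

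You can test this on the exactly solvable structure of the commutative case, $\theta_{\epsilon}(t,x)=u\bigl(t,x+\sum_{k}\sigma_{k}\mathcal{G}^{k,\epsilon}_{t}\bigr)$ with $u=e^{\kappa t\Delta}\theta_{0}$: computing $Var(\theta_{\epsilon}(t,x))$ as a Gaussian smoothing of $u^{2}$ minus the square of the smoothing of $u$, and differentiating in time, the $\kappa$-contribution is $\kappa\Delta V$ minus twice $\kappa$ times the (generally nonvanishing) mean squared gradient of the fluctuation, not $2\kappa\Delta V$; the discrepancy disappears only for $\kappa=0$. For comparison, the paper's proof disposes of this term by asserting $\int\int e^{2\pi i(\xi-\eta)\cdot x}|\xi|^{2}C(s,\xi,\eta)\,d\xi\,d\eta=-\Delta V(s,x)$, i.e.\ by letting $\Delta$ act only on the factor $e^{2\pi i\xi\cdot x}$ even though the inner $\eta$-integral also depends on $x$; that is precisely the step your decomposition isolates, and it is not justified as written. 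So: your treatment of the $\mathcal{L}V$ and source terms agrees with the paper; the diffusive term is a real gap in your proposal, and an honest completion along your own lines yields, for $\kappa>0$, $\kappa\Delta V$ plus a nonpositive remainder $-2\kappa\lim_{\epsilon}\mathbb{E}|\nabla(\theta_{\epsilon}-\mathbb{E}\theta_{\epsilon})|^{2}$ rather than the stated $2\kappa\Delta V$.
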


\begin{proof}
The proof proceeds by applying $\int\int e^{2\pi i\left(  \xi-\eta\right)
\cdot x}\cdot\cdot\cdot d\xi d\eta$ to each term of the identity of
Proposition \ref{Proposition variance Fourier}. Its application to the first
term on the right-hand-side gives us $\int_{0}^{t}2\Delta V\left(  s,x\right)
ds$ because%
\begin{align*}
&  \int\int e^{2\pi i\left(  \xi-\eta\right)  \cdot x}\left\vert
\xi\right\vert ^{2}C\left(  s,\xi,\eta\right)  d\xi d\eta\\
&  =\int e^{2\pi i\xi\cdot x}\left\vert \xi\right\vert ^{2}\left(  \int
e^{-2\pi i\eta\cdot x}C\left(  s,\xi,\eta\right)  d\eta\right)  d\xi\\
&  =-\Delta\int e^{2\pi i\xi\cdot x}\left(  \int e^{-2\pi i\eta\cdot
x}C\left(  s,\xi,\eta\right)  d\eta\right)  d\xi\\
&  =-\Delta V\left(  t,x\right)
\end{align*}
and a similar computation on the conjugate holds for the other term.

Let us come to the second term on the right-hand-side of the identity of
Proposition \ref{Proposition variance Fourier}. To make the computation more
transparent, let us write $V\left(  t,x\right)  $ in Fourier form as
\begin{align*}
V\left(  t,x\right)   &  =\int\int e^{2\pi i\left(  \xi-\eta\right)  \cdot
x}C\left(  t,\xi,\eta\right)  d\xi d\eta\\
&  \overset{\xi^{\prime}=\xi-\eta}{=}\int\left(  \int e^{2\pi i\xi^{\prime
}\cdot x}C\left(  s,\xi^{\prime}+\eta,\eta\right)  d\xi^{\prime}\right)
d\eta\\
&  =\int e^{2\pi i\xi^{\prime}\cdot x}\left(  \int C\left(  s,\xi^{\prime
}+\eta,\eta\right)  d\eta\right)  d\xi^{\prime}=\int e^{2\pi i\xi\cdot
x}\widehat{V}\left(  t,\xi\right)  d\xi,
\end{align*}
where%
\[
\widehat{V}\left(  t,\xi\right)  =\int C\left(  s,\xi+\eta,\eta\right)
d\eta.
\]
Then%
\begin{align*}
\widehat{\left(  \mathcal{L}V\left(  t\right)  \right)  }\left(  \xi\right)
&  =\sigma^{2}\left(  \xi\right)  \widehat{V}\left(  t,\xi\right) \\
&  =\sigma^{2}\left(  \xi\right)  \int C\left(  s,\xi+\eta,\eta\right)  d\eta
\end{align*}%
\begin{align*}
&  \left(  \mathcal{L}V\left(  s\right)  \right)  \left(  x\right) \\
&  =\int e^{2\pi i\xi\cdot x}\widehat{\left(  \mathcal{L}V\left(  s\right)
\right)  }\left(  \xi\right)  d\xi\\
&  =\int e^{2\pi i\xi\cdot x}\sigma^{2}\left(  \xi\right)  \int C\left(
s,\xi+\eta,\eta\right)  d\eta d\xi\\
&  \overset{\xi^{\prime}=\xi+\eta}{=}\int\int e^{2\pi i\left(  \xi^{\prime
}-\eta\right)  \cdot x}\sigma^{2}\left(  \xi^{\prime}-\eta\right)  C\left(
s,\xi^{\prime},\eta\right)  d\xi^{\prime}d\eta.
\end{align*}
Thus also the second term is checked.

Finally, let us treat the third term on the right-hand-side of the identity of
Proposition \ref{Proposition variance Fourier}. Here we simply have
\begin{align*}
&  \int\int e^{2\pi i\left(  \xi-\eta\right)  \cdot x}\left(  \sigma_{k}%
\cdot\xi\right)  \left(  \sigma_{k}\cdot\eta\right)  e\left(  s,\xi\right)
\overline{e\left(  s,\eta\right)  }d\xi d\eta\\
&  =\left(  \int e^{2\pi i\xi\cdot x}i\left(  \sigma_{k}\cdot\xi\right)
e\left(  s,\xi\right)  d\xi\right)  \left(  \overline{\int e^{2\pi i\eta\cdot
x}i\left(  \sigma_{k}\cdot\eta\right)  e\left(  s,\eta\right)  d\eta}\right)
\\
&  =\left(  \left(  \sigma_{k}\cdot\nabla\right)  \overline{\theta}\left(
s,x\right)  \right)  ^{2},
\end{align*}
which leads to the claimed identity.
\end{proof}

\section{The non commutative case\label{Section non commut}}

The case when the stochastic transport terms do not have constant-in-space
coefficients and do not commute between themselves and with the Laplacian, is
admittedly very difficult and still obscure, from the viewpoint of theoretical
quantitative results. We may only present two subsections with side remarks on
this topic.

The first subsection idealizes a turbulent 2D fluid undergoing inverse cascade
by prescribing two families of stochastic transport terms:\ a
small-space-scale component modeling the smallest turbulent scales,
maintained, white noise (namely uncorrelated) in time, and a
larger-space-scale component modeling the larger structures which appear and
disappear by inverse cascade. The latter are constant in space, idealization
of their relative size with respect to the smallest ones, and correlated in
time. The result we prove is that the smaller scales produce the effect
predicted by the Boussinesq hypothesis, while the larger ones are maintained
in their form. The system then reduces to the commutative case.

The second subsection is only aimed to explain as clearly as possible the
technical difficulty arising in a truly non-commutative case. A commutator
appears which spoils the simple link with the mean field equation found in
Section \ref{Section Commutative}. Nevertheless, a link up to a remainder
exists and could be important in future investigations.

\subsection{Two-scale system and reduction to the commutative
case\label{Section Galeati}}

Consider a more complete fluid dynamic model than the one introduced in
Section \ref{Section Intro}, further parametrized by a parameter
$N\in\mathbb{N}$, i.e.%

\begin{align*}
\partial_{t}\theta_{\epsilon,N}\left(  t,x\right)   &  =\kappa\Delta
\theta_{\epsilon,N}\left(  t,x\right) \\
&  +\mathcal{L}_{N}^{0}\theta_{\epsilon,N}\left(  t,x\right)  +\sum_{j\in
J_{N}}\left(  v_{j,N}\left(  x\right)  \cdot\nabla\right)  \theta_{\epsilon
,N}\left(  t,x\right)  \frac{dW_{t}^{j}}{dt}\\
&  +\sum_{k\in K}\left(  \sigma_{k}\left(  x\right)  \cdot\nabla\right)
\theta_{\epsilon,N}\left(  t,x\right)  \frac{d\mathcal{G}_{t}^{k,\epsilon}%
}{dt}\\
\theta_{\epsilon,N}|_{t=0}  &  =\theta_{0},
\end{align*}
where now $\sigma_{k}\left(  x\right)  $ are smooth divergence free fields,
the finite index set $J_{N}$ may vary with $N$, the vector fields
$v_{j,N}\left(  x\right)  $ too, as well as the associated covariance function
$Q_{N}^{0}\left(  x,y\right)  $ defined as%
\[
Q_{N}^{0}\left(  x,y\right)  =\sum_{j\in J_{N}}v_{j,N}\left(  x\right)
\otimes v_{j,N}\left(  y\right)
\]
covariance operator $\mathbb{Q}_{N}^{0}$ on vector fields $v,w\in L^{2}\left(
\mathbb{R}^{2}\right)  $ defined as%
\[
\left\langle \mathbb{Q}_{N}^{0}v,w\right\rangle _{_{L^{2}}}=\int\int w\left(
x\right)  ^{T}\cdot Q_{N}^{0}\left(  x,y\right)  \cdot v\left(  y\right)
dxdy
\]
and differential operator $\mathcal{L}_{N}^{0}$ defined as%
\[
\left(  \mathcal{L}_{N}^{0}f\right)  \left(  x\right)  =\operatorname{div}%
\left(  Q_{N}^{0}\left(  x,x\right)  \nabla f\left(  x\right)  \right) .
\]
The sum%
\[
\mathcal{L}_{N}^{0}\theta_{\epsilon,N}\left(  t,x\right)  +\sum_{j\in J_{N}%
}\left(  v_{j,N}\left(  x\right)  \cdot\nabla\right)  \theta_{\epsilon
,N}\left(  t,x\right)  \frac{dW_{t}^{j}}{dt}%
\]
stands for the It\^{o} formulation (easier to define) of the Stratonovich
integral
\[
\sum_{j\in J_{N}}\left(  v_{j,N}\left(  x\right)  \cdot\nabla\right)
\theta_{\epsilon,N}\left(  t,x\right)  \circ\frac{dW_{t}^{j}}{dt}.
\]
The main aim of this section is proving that, under suitable assumptions, we
may reduce the model to the commutative case. This requires that the It\^{o}
integrals go to zero and that the corrector goes to $\kappa_{T}\Delta
\theta_{\epsilon,N}\left(  t,x\right)  $. Since we want to interpret
rigorously the equation in mild form, in order to reduce details, we assume
that the diagonal $Q_{N}^{0}\left(  x,x\right)  $ is independent of $N$ and
already equal to $\kappa_{T}Id$, so
\[
Q_{N}^{0}\left(  x,x\right)  =\kappa_{T}Id,
\]
for every $N\in\mathbb{N}$ and $x\in\mathbb{R}^{2}$. Moreover, we assume that
$\kappa+\kappa_{T}>0$. Let $A$ be the infinitesimal generator of analytic
semigroup in $L^{2}\left(  \mathbb{R}^{2}\right)  $ (see \cite{Pa}, Chapter
7), defined on $W^{2,2}\left(  \mathbb{R}^{2}\right)  $ as%
\[
\left(  Af\right)  \left(  x\right)  =\left(  \kappa+\kappa_{T}\right)  \Delta
f\left(  x\right)  .
\]
Since we assume independence of $\left(  W^{j};j\in J_{N}\right)  $ from
$\left(  \mathcal{G}_{t}^{k,\epsilon};k\in K\right)  $ and the analysis in
this section is pathwise with respect to $\left(  \mathcal{G}_{t}^{k,\epsilon
};k\in K\right)  $, we replace the above equation by
\begin{align*}
d\theta_{\epsilon,N}\left(  t\right)   &  =\left(  A\theta_{\epsilon,N}\left(
t\right)  +\left(  v\left(  t\right)  \cdot\nabla\right)  \theta_{\epsilon
,N}\left(  t\right)  \right)  dt+\sum_{j\in J_{N}}\left(  v_{j,N}\cdot
\nabla\right)  \theta_{\epsilon,N}\left(  t\right)  dW_{t}^{j}\\
\theta_{\epsilon,N}|_{t=0}  &  =\theta_{0},
\end{align*}
where $v\left(  t,x\right)  $ is a single path of $\sum_{k\in K}\sigma
_{k}\left(  x\right)  \frac{d\mathcal{G}_{t}^{k,\epsilon}}{dt}$. This
equation, when $\theta_{0}\in L^{2}\left(  \mathbb{R}^{2}\right)  $, can be
solved, in mild form%
\begin{align*}
\theta_{\epsilon,N}\left(  t\right)   &  =e^{tA}\theta_{0}+\sum_{j\in J_{N}%
}\int_{0}^{t}e^{\left(  t-s\right)  A}\left(  v_{j,N}\cdot\nabla\right)
\theta_{\epsilon,N}\left(  s\right)  dW_{s}^{j}\\
&  +\int_{0}^{t}e^{\left(  t-s\right)  A}\left(  v\left(  s\right)
\cdot\nabla\right)  \theta_{\epsilon,N}\left(  s\right)  ds,
\end{align*}
as in the case $v\left(  t\right)  =0$ as in \cite{FlaLuongo2}, Chapter 3.
Here $e^{tA}$, $t\geq0$, denotes the analytic semigroup generated by $A$ on
$L^{2}\left(  \mathbb{R}^{2}\right)  $. The solution is an adapted process
with paths of class%
\[
\theta_{\epsilon,N}\in C\left(  \left[  0,T\right]  ;L^{2}\left(
\mathbb{R}^{2}\right)  \right)  \cap L^{2}\left(  0,T;W^{1,2}\left(
\mathbb{R}^{2}\right)  \right)  ,
\]
it satisfies a.s.%
\[
\sup_{t\in\left[  0,T\right]  }\left\Vert \theta_{\epsilon,N}\left(  t\right)
\right\Vert _{L^{2}\left(  \mathbb{R}^{2}\right)  }^{2}+\kappa\int_{0}%
^{T}\left\Vert \nabla\theta_{\epsilon,N}\left(  s\right)  \right\Vert
_{L^{2}\left(  \mathbb{R}^{2}\right)  }^{2}ds\leq\left\Vert \theta
_{0}\right\Vert _{L^{2}\left(  \mathbb{R}^{2}\right)  }^{2}.
\]
Moreover, it satisfies the maximum principle (\cite{FlaLuongo2}, Chapter 3;
see also \cite{FlaLuongo1})
\begin{equation}
\sup_{t\in\left[  0,T\right]  }\left\Vert \theta_{\epsilon,N}\left(  t\right)
\right\Vert _{L^{\infty}\left(  \mathbb{R}^{2}\right)  }\leq\left\Vert
\theta_{0}\right\Vert _{L^{\infty}\left(  \mathbb{R}^{2}\right)}.
\label{maximum prunciple}%
\end{equation}
However, with some easy technical work, we may also interpret the equation in
the alternative mild form
\begin{equation}
\theta_{\epsilon,N}\left(  t\right)  =U\left(  t,0\right)  \theta_{0}%
+\sum_{j\in J_{N}}\int_{0}^{t}U\left(  t,s\right)  \left(  v_{j,N}\cdot
\nabla\right)  \theta_{\epsilon,N}\left(  s\right)  dW_{s}^{j}, \label{mild U}
\end{equation}
where $U\left(  t,s\right)  $ is the evolution operator defined as follows.
For every $s\geq0$, consider the deterministic equation%
\begin{align*}
u\left(  t\right)   &  =e^{\left(  t-s\right)  A}u_{0}+\int_{s}^{t}e^{\left(
t-r\right)  A}\left(  v\left(  r\right)  \cdot\nabla\right)  u\left(
r\right)  dr\\
\text{for }t  &  \in\lbrack s,\infty).
\end{align*}
For every $u_{0}\in L^{2}\left(  \mathbb{R}^{2}\right)  $ and $T>s$, let
\[
u\in C\left(  \left[  s,T\right]  ;L^{2}\left(  \mathbb{R}^{2}\right)
\right)  \cap L^{2}\left(  s,T;W^{1,2}\left(  \mathbb{R}^{2}\right)  \right)
\]
be its unique solution. Then we set
\[
U\left(  t,s\right)  u_{0}=u\left(  t\right)  ,
\]
for $t\in\left[  s,T\right]  $ and extend to all $t$ in an obvious way using
the uniqueness. We construct a family of bounded linear operators $\left\{
U\left(  t,s\right)  ;0\leq s\leq t\right\}  $ on $L^{2}\left(  \mathbb{R}%
^{2}\right)  $. With a little work one can show that $\left(  t,s\right)
\longmapsto U\left(  t,s\right)  u_{0}$ is continuous, for every $u_{0}\in
L^{2}\left(  \mathbb{R}^{2}\right)  $ and that the mild formulation
(\ref{mild U}) based on $U\left(  t,s\right)  $ holds true. Finally, it is
easy to see that $U^{\ast}\left(  t,s\right)  $ is the analogous evolution
operator associated to the equation%
\begin{align*}
z\left(  t\right)   &  =e^{\left(  t-s\right)  A}z_{0}-\int_{s}^{t}e^{\left(
t-r\right)  A}\left(  v\left(  r\right)  \cdot\nabla\right)  z\left(
r\right)  dr\\
\text{for }t  &  \in\lbrack s,\infty).
\end{align*}
In particular it satisfies the inequality%
\begin{equation}
\sup_{t\in\left[  s,T\right]  }\left\Vert U^{\ast}\left(  t,s\right)
\phi\right\Vert _{L^{2}\left(  \mathbb{R}^{2}\right)  }^{2}+\kappa\int_{s}%
^{T}\left\Vert \nabla U^{\ast}\left(  t,s\right)  \phi\right\Vert
_{L^{2}\left(  \mathbb{R}^{2}\right)  }^{2}ds\leq\left\Vert \phi\right\Vert
_{L^{2}\left(  \mathbb{R}^{2}\right)  }^{2}. \label{U star}%
\end{equation}
Consider also the reduced problem%
\begin{align*}
\partial_{t}\theta_{\epsilon}\left(  t,x\right)   &  =\left(  \kappa
+\kappa_{T}\right)  \Delta\theta_{\epsilon}\left(  t,x\right)  +\sum_{k\in
K}\left(  \sigma_{k}\cdot\nabla\right)  \theta_{\epsilon}\left(  t,x\right)
\frac{d\mathcal{G}_{t}^{k,\epsilon}}{dt}\\
\theta_{\epsilon}|_{t=0}  &  =\theta_{0}.
\end{align*}
We simply have%
\[
\theta_{\epsilon}\left(  t\right)  =U\left(  t,0\right)  \theta_{0}.
\]

\begin{theorem}
We have%
\[
\mathbb{E}\left[  \left\langle \theta_{\epsilon,N}\left(  t\right)
-\theta_{\epsilon}\left(  t\right)  ,\phi\right\rangle ^{2}\right]  \leq
T\left\Vert \mathbb{Q}_{N}^{0}\right\Vert _{L^{2}\rightarrow L^{2}}\left\Vert
\theta_{0}\right\Vert _{L^{\infty}}^{2}\left\Vert \phi\right\Vert _{L^{2}}^{2}%
\]
for every $\phi\in L^{2}\left(  \mathbb{R}^{2}\right)  $ and $t\in\left[
0,T\right]  $. Therefore
\[
\lim_{N\rightarrow\infty}\sup_{t\in\left[  0,T\right]  }\mathbb{E}\left[
\left\langle \theta_{\epsilon,N}\left(  t\right)  -\theta_{\epsilon}\left(
t\right)  ,\phi\right\rangle ^{2}\right]  =0,
\]
if $\lim_{N\rightarrow\infty}\left\Vert \mathbb{Q}_{N}^{0}\right\Vert
_{L^{2}\rightarrow L^{2}}=0$.
\end{theorem}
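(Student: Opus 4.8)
The plan is to subtract the two mild formulations and read off the difference as a single stochastic integral. Since $\theta_{\epsilon}(t)=U(t,0)\theta_{0}$ while, by (\ref{mild U}), $\theta_{\epsilon,N}(t)=U(t,0)\theta_{0}+\sum_{j\in J_{N}}\int_{0}^{t}U(t,s)(v_{j,N}\cdot\nabla)\theta_{\epsilon,N}(s)\,dW_{s}^{j}$, the deterministic part cancels and
\[
\theta_{\epsilon,N}(t)-\theta_{\epsilon}(t)=\sum_{j\in J_{N}}\int_{0}^{t}U(t,s)(v_{j,N}\cdot\nabla)\theta_{\epsilon,N}(s)\,dW_{s}^{j}.
\]
Pairing with $\phi$ and moving $U(t,s)$ onto $\phi$ through its adjoint $U^{\ast}(t,s)$ --- for fixed $t$ the family $\{U^{\ast}(t,s)\phi\}_{s\le t}$ is deterministic once the path of $\mathcal{G}$ is frozen, hence $\mathcal{F}_{s}$-measurable and usable as an integrand --- one obtains the scalar identity
\[
\langle\theta_{\epsilon,N}(t)-\theta_{\epsilon}(t),\phi\rangle=\sum_{j\in J_{N}}\int_{0}^{t}\langle(v_{j,N}\cdot\nabla)\theta_{\epsilon,N}(s),U^{\ast}(t,s)\phi\rangle\,dW_{s}^{j}.
\]

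Next I would apply the It\^{o} isometry. As the $W^{j}$ are independent standard Brownian motions and the integrands adapted,
\[
\mathbb{E}[\langle\theta_{\epsilon,N}(t)-\theta_{\epsilon}(t),\phi\rangle^{2}]=\sum_{j\in J_{N}}\int_{0}^{t}\mathbb{E}[\langle(v_{j,N}\cdot\nabla)\theta_{\epsilon,N}(s),U^{\ast}(t,s)\phi\rangle^{2}]\,ds.
\]
Because $\operatorname{div}v_{j,N}=0$, integration by parts in space gives $\langle(v_{j,N}\cdot\nabla)\theta_{\epsilon,N}(s),U^{\ast}(t,s)\phi\rangle=-\langle v_{j,N},\theta_{\epsilon,N}(s)\nabla U^{\ast}(t,s)\phi\rangle_{L^{2}}$; writing $g_{s}:=\theta_{\epsilon,N}(s)\nabla U^{\ast}(t,s)\phi\in L^{2}(\mathbb{R}^{2};\mathbb{R}^{2})$, the $j$-sum becomes precisely a quadratic form of the covariance operator,
\[
\sum_{j\in J_{N}}\langle(v_{j,N}\cdot\nabla)\theta_{\epsilon,N}(s),U^{\ast}(t,s)\phi\rangle^{2}=\sum_{j\in J_{N}}\langle v_{j,N},g_{s}\rangle_{L^{2}}^{2}=\langle\mathbb{Q}_{N}^{0}g_{s},g_{s}\rangle_{L^{2}}\le\|\mathbb{Q}_{N}^{0}\|_{L^{2}\rightarrow L^{2}}\,\|g_{s}\|_{L^{2}}^{2}.
\]

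Then I would use the maximum principle (\ref{maximum prunciple}) to estimate $\|g_{s}\|_{L^{2}}^{2}\le\|\theta_{\epsilon,N}(s)\|_{L^{\infty}}^{2}\|\nabla U^{\ast}(t,s)\phi\|_{L^{2}}^{2}\le\|\theta_{0}\|_{L^{\infty}}^{2}\|\nabla U^{\ast}(t,s)\phi\|_{L^{2}}^{2}$, and the energy estimate (\ref{U star}) for the adjoint evolution operator (equivalently, the energy identity for the backward equation defining $U^{\ast}$) to control $\int_{0}^{t}\|\nabla U^{\ast}(t,s)\phi\|_{L^{2}}^{2}\,ds$ by a constant times $\|\phi\|_{L^{2}}^{2}$; assembling the displays yields the announced inequality (the constant being recorded there as $T$). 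Since that bound is uniform in $t\in[0,T]$ and depends on $N$ only through $\|\mathbb{Q}_{N}^{0}\|_{L^{2}\rightarrow L^{2}}$, the limit assertion follows at once whenever $\|\mathbb{Q}_{N}^{0}\|_{L^{2}\rightarrow L^{2}}\rightarrow0$.

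The chain of inequalities is routine; the real work is its rigorous underpinning. One must justify the $U$-based mild formulation (\ref{mild U}) and, within it, that $s\mapsto U^{\ast}(t,s)\phi$ (with $t$ fixed) is measurable and square-integrable in $s$ so that the It\^{o} isometry is legitimate; that the spatial integration by parts is valid knowing only $\nabla\theta_{\epsilon,N}(s)\in L^{2}$ for a.e.\ $s$; and --- the point I expect to be the main obstacle --- that enough of the parabolic smoothing behind (\ref{U star}) survives the transport perturbation by $v$ (which is large, $\epsilon$-dependent and only controlled pathwise) for the integral $\int_{0}^{t}\|\nabla U^{\ast}(t,s)\phi\|_{L^{2}}^{2}\,ds$, taken over the second argument of $U^{\ast}$ with $t$ fixed, to be finite and bounded in terms of $\|\phi\|_{L^{2}}^{2}$. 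Granting that, the It\^{o} isometry, the definition of $\mathbb{Q}_{N}^{0}$, and the $L^{\infty}$ maximum principle combine exactly as above.
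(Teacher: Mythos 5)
Your argument is correct and follows essentially the same route as the paper's proof: subtract the two $U$-based mild formulations so the difference is a single sum of It\^{o} integrals, apply the It\^{o} isometry, dualize onto $U^{\ast}(t,s)\phi$, use $\operatorname{div}v_{j,N}=0$ to rewrite the $j$-sum as the quadratic form $\langle\mathbb{Q}_{N}^{0}g_{s},g_{s}\rangle_{L^{2}}$ with $g_{s}=\theta_{\epsilon,N}(s)\nabla U^{\ast}(t,s)\phi$, and close with the maximum principle (\ref{maximum prunciple}) and the energy estimate (\ref{U star}). The only difference is bookkeeping in the last step: you invoke the integrated (backward) energy bound $\int_{0}^{t}\|\nabla U^{\ast}(t,s)\phi\|_{L^{2}}^{2}\,ds\leq\|\phi\|_{L^{2}}^{2}/(2(\kappa+\kappa_{T}))$ over the second argument of $U^{\ast}$, which gives the same estimate with constant of order $1/(\kappa+\kappa_{T})$ in place of the $T$ recorded in the statement, whereas the paper uses a pointwise-in-$s$ bound on $\|\nabla U^{\ast}(t,s)\phi\|_{L^{2}}$ and then integrates over $[0,t]$; in either form the bound depends on $N$ only through $\|\mathbb{Q}_{N}^{0}\|_{L^{2}\rightarrow L^{2}}$, so the limit assertion follows.
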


\begin{proof}
One has%
\[
\left\langle \theta_{\epsilon,N}\left(  t\right)  -\theta_{\epsilon}\left(
t\right)  ,\phi\right\rangle =\sum_{j\in J_{N}}\int_{0}^{t}\left\langle
U\left(  t,s\right)  \left(  v_{j,N}\cdot\nabla\right)  \theta_{\epsilon
,N}\left(  s\right)  ,\phi\right\rangle dW_{s}^{j}%
\]%
\begin{align*}
\mathbb{E}\left[  \left\langle \theta_{\epsilon,N}\left(  t\right)
-\theta_{\epsilon}\left(  t\right)  ,\phi\right\rangle ^{2}\right]   &
=\sum_{j\in J_{N}}\mathbb{E}\int_{0}^{t}\left\langle U\left(  t,s\right)
\left(  v_{j,N}\cdot\nabla\right)  \theta_{\epsilon,N}\left(  s\right)
,\phi\right\rangle ^{2}ds\\
&  =\sum_{j\in J_{N}}\mathbb{E}\int_{0}^{t}\left\langle \theta_{\epsilon
,N}\left(  s\right)  ,\left(  v_{j,N}\cdot\nabla\right)  U^{\ast}\left(
t,s\right)  \phi\right\rangle ^{2}ds.
\end{align*}
Now, called $g_{t}\left(  s\right)  :=U^{\ast}\left(  t,s\right)  \phi$ for
shortness,%
\begin{align*}
&  \sum_{j\in J_{N}}\left\langle \theta_{\epsilon,N}\left(  s\right)  ,\left(
v_{j,N}\cdot\nabla\right)  g_{t}\left(  s\right)  \right\rangle ^{2}\\
&  =\sum_{j\in J_{N}}\int\int\theta_{\epsilon,N}\left(  s,x\right)  \left(
v_{j,N}\left(  x\right)  \cdot\nabla\right)  g_{t}\left(  s,x\right)
\theta_{\epsilon,N}\left(  s,y\right)  \left(  v_{j,N}\left(  y\right)
\cdot\nabla\right)  g_{t}\left(  s,y\right)  dxdy
\end{align*}%
\begin{align*}
&  =\sum_{\alpha,\beta=1}^{2}\sum_{j\in J_{N}}\int\int\theta_{\epsilon
,N}\left(  s,x\right)  v_{j,N}^{\alpha}\left(  x\right)  \partial_{\alpha
}g_{t}\left(  s,x\right)  \theta_{\epsilon,N}\left(  s,y\right)
v_{j,N}^{\beta}\left(  y\right)  \partial_{\beta}g_{t}\left(  s,y\right)
dxdy\\
&  =\sum_{\alpha,\beta=1}^{2}\int\int\theta_{\epsilon,N}\left(  s,x\right)
\left(  \sum_{j\in J_{N}}v_{j,N}^{\alpha}\left(  x\right)  v_{j,N}^{\beta
}\left(  y\right)  \right)  \partial_{\alpha}g_{t}\left(  s,x\right)
\theta_{\epsilon,N}\left(  s,y\right)  \partial_{\beta}g_{t}\left(
s,y\right)  dxdy
\end{align*}%
\begin{align*}
&  =\sum_{i,j=1}^{2}\int\int\theta_{\epsilon,N}\left(  s,x\right)
Q_{N}^{0,\alpha,\beta}\left(  x,y\right)  \partial_{\alpha}g_{t}\left(
s,x\right)  \theta_{\epsilon,N}\left(  s,y\right)  \partial_{\beta}%
g_{t}\left(  s,y\right)  dxdy\\
&  =\int\int\theta_{\epsilon,N}\left(  s,x\right)  \nabla g_{t}\left(
s,x\right)  ^{T}\cdot Q_{N}^{0}\left(  x,y\right)  \cdot\nabla g_{t}\left(
s,y\right)  \theta_{\epsilon,N}\left(  s,y\right)  dxdy
\end{align*}%
\begin{align*}
&  =\left\langle \mathbb{Q}_{N}^{0}\nabla g_{t}\left(  s\right)
\theta_{\epsilon,N}\left(  s\right)  ,\nabla g_{t}\left(  s\right)
\theta_{\epsilon,N}\left(  s\right)  \right\rangle \\
&  \leq\left\Vert \mathbb{Q}_{N}^{0}\right\Vert _{L^{2}\rightarrow L^{2}%
}\left\Vert \theta_{\epsilon,N}\left(  s\right)  \right\Vert _{L^{\infty}}%
^{2}\left\Vert \nabla U^{\ast}\left(  t,s\right)  \phi\right\Vert _{L^{2}}%
^{2}\\
&  \leq\left\Vert \mathbb{Q}_{N}^{0}\right\Vert _{L^{2}\rightarrow L^{2}%
}\left\Vert \theta_{0}\right\Vert _{L^{\infty}}^{2}\left\Vert \phi\right\Vert
_{L^{2}}^{2},
\end{align*}
by (\ref{maximum prunciple}) and (\ref{U star}). We conclude that%
\begin{align*}
\mathbb{E}\left[  \left\langle \theta_{\epsilon,N}\left(  t\right)
-\theta_{\epsilon}\left(  t\right)  ,\phi\right\rangle ^{2}\right]   &
\leq\mathbb{E}\int_{0}^{t}\left\Vert \mathbb{Q}_{N}^{0}\right\Vert
_{L^{2}\rightarrow L^{2}}\left\Vert \theta_{0}\right\Vert _{L^{\infty}}%
^{2}\left\Vert \phi\right\Vert _{L^{2}}^{2}ds\\
&  =T\left\Vert \mathbb{Q}_{N}^{0}\right\Vert _{L^{2}\rightarrow L^{2}%
}\left\Vert \theta_{0}\right\Vert _{L^{\infty}}^{2}\left\Vert \phi\right\Vert
_{L^{2}}^{2},
\end{align*}
for $t\in\left[  0,T\right]  $.
\end{proof}

\subsection{Link with the mean field equation, up to a
commutator\label{Subsection commutator}}

Consider now equation (\ref{eq 1}) without the assumption that the vector
fields $\sigma_{k}$ are constant; assume them smooth, bounded and divergence
free. Assume $\theta_{0}\in L^{2}\left(  \mathbb{R}^{2}\right)  $. As outlined
in the previous subsection, introducing the operator $A$ as above but with
$\kappa_{T}=0$ (assuming therefore $\kappa>0$) and the associated analytic
semigroup $e^{tA}$, $t\geq0$, one can study pathwise the equation in mild form%
\begin{equation}
\theta_{\epsilon}\left(  t\right)  =e^{tA}\theta_{0}+\sum_{k\in K}\int_{0}%
^{t}e^{\left(  t-s\right)  A}\left(  \sigma_{k}\cdot\nabla\right)
\theta_{\epsilon}\left(  s\right)  \frac{d\mathcal{G}_{s}^{k,\epsilon}}{ds}ds
\label{mild last section}%
\end{equation}
and prove that there exists a unique solution of class%
\[
\theta_{\epsilon}\in C\left(  \left[  0,T\right]  ;L^{2}\left(  \mathbb{R}%
^{2}\right)  \right)  \cap L^{2}\left(  0,T;W^{1,2}\left(  \mathbb{R}%
^{2}\right)  \right)  .
\]
Moreover, it is measurable in the random parameter. Moreover, it holds
\[
\sup_{t\in\left[  0,T\right]  }\left\Vert \theta_{\epsilon}\left(  t\right)
\right\Vert _{H}^{2}+\kappa\int_{0}^{T}\left\Vert \theta_{\epsilon}\left(
s\right)  \right\Vert _{V}^{2}ds\leq\left\Vert \theta_{0}\right\Vert _{H}%
^{2}.
\]
Solving the equation from a generic initial time $s\geq0$ as indicated in the
previous subsection, the solution defines a family $U_{\epsilon}\left(
t,s,\omega\right)  $ of bounded linear operators on $L^{2}\left(
\mathbb{R}^{2}\right)  $, for $t\geq s\geq0$, satisfying%
\[
U_{\epsilon}\left(  t,s,\omega\right)  U_{\epsilon}\left(  s,0,\omega\right)
=U_{\epsilon}\left(  t,0,\omega\right)
\]%
\[
U_{\epsilon}\left(  s,s,\omega\right)  =Id
\]%
\[
\theta_{\epsilon}\left(  t\right)  =U_{\epsilon}\left(  t,0\right)  \theta
_{0}.
\]
Precisely, $U_{\epsilon}\left(  t,s\right)  \psi$ satisfies%
\[
U_{\epsilon}\left(  t,s\right)  \psi=e^{tA}\psi+\sum_{k\in K}\int_{s}%
^{t}e^{\left(  t-r\right)  A}\left(  \sigma_{k}\cdot\nabla\right)
U_{\epsilon}\left(  r,s\right)  \psi\frac{d\mathcal{G}_{r}^{k,\epsilon}}%
{dr}dr.
\]
In this case we are not able to close the equation for the expected value
$\mathbb{E}\left[  \theta_{\epsilon}\left(  t\right)  \right]  $ and our aim
therefore is only to estimate its distance from the solution of the mean field
equation (\ref{eq 2}).

In order to see the difficulty, let us consider equation
(\ref{mild last section}) in weak form on a test function $\phi\in
C_{c}^{\infty}\left(  \mathbb{R}^{2}\right)  $
\[
\left\langle \theta_{\epsilon}\left(  t\right)  ,\phi\right\rangle
=\left\langle e^{tA}\theta_{0},\phi\right\rangle -\sum_{k\in K}\int_{0}%
^{t}\left\langle \theta_{\epsilon}\left(  s\right)  ,\left(  \sigma_{k}%
\cdot\nabla\right)  e^{\left(  t-s\right)  A}\phi\right\rangle \frac
{d\mathcal{G}_{s}^{k,\epsilon}}{ds}ds.
\]
Then, similarly to the strategy described in Section \ref{Section Commutative}%
, by means of formula (\ref{basic formula}) we rewrite the stochastic integral
a a Skorohod integral (mean zero) plus a trace%
\begin{align*}
\left\langle \theta_{\epsilon}\left(  t\right)  ,\phi\right\rangle  &
=\left\langle e^{tA}\theta_{0},\phi\right\rangle +M_{t}\\
&  -\sum_{k\in K}\int_{0}^{t}\left\langle \left\langle D_{\cdot}^{\left(
k\right)  }\theta_{\epsilon}\left(  s\right)  ,\frac{1}{2\epsilon}1_{\left[
(s-\epsilon)_{+},s+\epsilon\right]  }\right\rangle _{\mathcal{H}},\left(
\sigma_{k}\cdot\nabla\right)  e^{\left(  t-s\right)  A}\phi\right\rangle ds,
\end{align*}
where $M_{t}$ has zero mean. Therefore%
\begin{align*}
\left\langle \mathbb{E}\left[  \theta_{\epsilon}\left(  t\right)  \right]
,\phi\right\rangle  &  =\left\langle e^{tA}\theta_{0},\phi\right\rangle \\
&  -\sum_{k\in K}\int_{0}^{t}\left\langle \left\langle \mathbb{E}\left[
D_{\cdot}^{\left(  k\right)  }\theta_{\epsilon}\left(  s\right)  \right]
,\frac{1}{2\epsilon}1_{\left[  (s-\epsilon)_{+},s+\epsilon\right]
}\right\rangle _{\mathcal{H}},\left(  \sigma_{k}\cdot\nabla\right)  e^{\left(
t-s\right)  A}\phi\right\rangle ds.
\end{align*}
In the commutative case $D_{\cdot}^{\left(  k\right)  }\theta_{\epsilon
}\left(  s\right)  $ can be expressed by means of $\theta_{\epsilon}\left(
s\right)  $ and we find a closed equation for $\mathbb{E}\left[
\theta_{\epsilon}\left(  t\right)  \right]  $. Indeed, by Lemma
\ref{lemma link Malliavin derivative to function}, we have%
\begin{align*}
D_{r}^{\left(  k\right)  }\theta_{\epsilon}\left(  t\right)   &
=\chi_{\epsilon}\left(  t,r\right)  \left(  \sigma_{k}\cdot\nabla\right)
\theta_{\epsilon}\left(  t\right) \\
\chi_{\epsilon}\left(  t,r\right)   &  =\frac{1}{2\epsilon}\int_{0}%
^{t}1_{\left[  (s-\epsilon)_{+},s+\epsilon\right]  }\left(  r\right)  ds.
\end{align*}
Now, without commutation, we have only the following result.

\begin{lemma}%
\[
D_{r}^{\left(  k\right)  }\theta_{\epsilon}\left(  t\right)  =\chi_{\epsilon
}\left(  t,r\right)  U_{\epsilon}\left(  t,r\right)  \left(  \sigma_{k}%
\cdot\nabla\right)  \theta_{\epsilon}\left(  r\right)  .
\]

\end{lemma}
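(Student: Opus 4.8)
The plan is to differentiate the mild equation (\ref{mild last section}) in the Malliavin sense and then to solve the linear equation so obtained by variation of constants with the evolution family $U_\epsilon$, mirroring the commutative computation behind Lemma \ref{lemma link Malliavin derivative to function}. As a preliminary one needs to know that $\theta_\epsilon(t)$ is Malliavin differentiable and that $D_r^{(k)}$ may be exchanged with the Bochner integral in (\ref{mild last section}); since the driving functionals $\frac{d\mathcal{G}_s^{k',\epsilon}}{ds}$ lie in the first Wiener chaos and are Malliavin smooth and the map producing the iterates of (\ref{mild last section}) is affine, uniform estimates on the iterates and on their Malliavin derivatives propagate along the fixed point scheme used to construct $\theta_\epsilon$, and passing to the limit (using the closedness of $D$) gives both the regularity and the right to differentiate termwise.

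Two facts drive the computation. First, the operators $e^{sA}$ and $(\sigma_{k'}\cdot\nabla)$ are deterministic and closed on $L^2(\mathbb{R}^2)$, hence commute with $D_r^{(k)}$. Second, since $D^{(k)}G_t^{k'}=\delta_{kk'}1_{[0,t]}$ — which is what lies behind (\ref{approximate derivative}) — differentiating in $t$ gives
\[
D_r^{(k)}\frac{d\mathcal{G}_s^{k',\epsilon}}{ds}=\delta_{kk'}\,\frac{1}{2\epsilon}1_{\left[(s-\epsilon)_+,\,s+\epsilon\right]}(r).
\]
Applying $D_r^{(k)}$ to (\ref{mild last section}), using the Leibniz rule for the scalar factor $\frac{d\mathcal{G}_s^{k',\epsilon}}{ds}$ (the deterministic term $e^{tA}\theta_0$ disappearing), we obtain
\begin{align*}
D_r^{(k)}\theta_\epsilon(t)
&=\sum_{k'\in K}\int_0^t e^{(t-s)A}(\sigma_{k'}\cdot\nabla)\,D_r^{(k)}\theta_\epsilon(s)\,\frac{d\mathcal{G}_s^{k',\epsilon}}{ds}\,ds\\
&\quad+\int_0^t e^{(t-s)A}(\sigma_k\cdot\nabla)\theta_\epsilon(s)\,\frac{1}{2\epsilon}1_{\left[(s-\epsilon)_+,\,s+\epsilon\right]}(r)\,ds .
\end{align*}
For each fixed $r$ this is the mild form of the same linear Cauchy problem that defines $U_\epsilon(\cdot,\cdot)$, with zero initial datum and the extra source $s\mapsto e^{(t-s)A}(\sigma_k\cdot\nabla)\theta_\epsilon(s)\,\frac{1}{2\epsilon}1_{[(s-\epsilon)_+,s+\epsilon]}(r)$.

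By the uniqueness of mild solutions and the variation of constants formula for $U_\epsilon$ this yields
\[
D_r^{(k)}\theta_\epsilon(t)=\int_0^t U_\epsilon(t,s)(\sigma_k\cdot\nabla)\theta_\epsilon(s)\,\frac{1}{2\epsilon}1_{\left[(s-\epsilon)_+,\,s+\epsilon\right]}(r)\,ds .
\]
Writing the kernel as $\frac{1}{2\epsilon}1_{[(r-\epsilon)_+,r+\epsilon]}(s)$, it is carried by times $s$ near $r$ and has total mass $\chi_\epsilon(t,r)=\frac{1}{2\epsilon}\int_0^t 1_{[(s-\epsilon)_+,s+\epsilon]}(r)\,ds$; using $\theta_\epsilon(s)=U_\epsilon(s,r)\theta_\epsilon(r)$ and the flow identity $U_\epsilon(t,s)U_\epsilon(s,r)=U_\epsilon(t,r)$ one then recognizes the right-hand side as $\chi_\epsilon(t,r)\,U_\epsilon(t,r)(\sigma_k\cdot\nabla)\theta_\epsilon(r)$, which is the assertion; specialising to constant $\sigma_k$ this last step is exactly Lemma \ref{lemma link Malliavin derivative to function}. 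I expect the delicate part to be not the variation-of-constants algebra but the preliminary Malliavin regularity together with the exchange of $D_r^{(k)}$ with the Bochner integral, which here — $U_\epsilon$ being implicit — must be carried out through the fixed point scheme rather than read off an explicit solution as in (\ref{F explicit}).
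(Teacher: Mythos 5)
Your derivation coincides with the paper's up to and including the variation-of-constants step: differentiating the mild equation (\ref{mild last section}), using (\ref{approximate derivative}) for $D_{r}^{(k)}\mathcal{G}^{k,\epsilon}$, and identifying the resulting linear equation through the evolution family yields
\[
D_{r}^{\left(  k\right)  }\theta_{\epsilon}\left(  t\right)  =\int_{0}
^{t}U_{\epsilon}\left(  t,s\right)  \left(  \sigma_{k}\cdot\nabla\right)
\theta_{\epsilon}\left(  s\right)  \frac{1}{2\epsilon}1_{\left[
(s-\epsilon)_{+},s+\epsilon\right]  }\left(  r\right)  ds,
\]
and your preliminary remarks on Malliavin differentiability along the fixed-point scheme are sensible. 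The gap is in the final step. The kernel is not a point mass at $s=r$ but a density spread over a window of width $2\epsilon$, so extracting the factor $\chi_{\epsilon}(t,r)$ requires the integrand $s\mapsto U_{\epsilon}(t,s)\left(  \sigma_{k}\cdot\nabla\right)  U_{\epsilon}(s,r)\theta_{\epsilon}(r)$ to be constant on that window; and the justification you offer --- the identity $\theta_{\epsilon}(s)=U_{\epsilon}(s,r)\theta_{\epsilon}(r)$ together with the flow property $U_{\epsilon}(t,s)U_{\epsilon}(s,r)=U_{\epsilon}(t,r)$ --- can only be applied after sliding $\left(  \sigma_{k}\cdot\nabla\right)$ across $U_{\epsilon}(s,r)$, since the transport operator sits between the two evolution operators. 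That interchange is exactly what fails for non-constant $\sigma_{k}$: it is the commutator whose expectation defines the remainder $R_{\epsilon,k}$ in (\ref{defin commut}), and the discussion immediately after the lemma stresses that $U_{\epsilon}\left(  \sigma_{k}\cdot\nabla\right)$ and $\left(  \sigma_{k}\cdot\nabla\right)  U_{\epsilon}$ cannot be commuted. Read as an exact deduction, your step is therefore invalid (as you yourself note, it is precisely the commutative mechanism); moreover, for $s<r$ inside the window the relation $\theta_{\epsilon}(s)=U_{\epsilon}(s,r)\theta_{\epsilon}(r)$ is not even available.

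For comparison, the paper reaches the stated formula by a different collapse: it rewrites the source term $\int_{0}^{t}e^{(t-s)A}\left(  \sigma_{k}\cdot\nabla\right)  \theta_{\epsilon}(s)\,d_{s}D_{r}^{(k)}\mathcal{G}_{s}^{k,\epsilon}$ directly as $e^{(t-r)A}\left(  \sigma_{k}\cdot\nabla\right)  \theta_{\epsilon}(r)\,\chi_{\epsilon}(t,r)$ --- i.e.\ it freezes the $s$-dependence at $s=r$, treating the smeared kernel as concentrated at $r$, which is a continuity-in-time evaluation over the $2\epsilon$-window and involves no commutation of operators --- and then concludes by uniqueness for the equation defining $U_{\epsilon}(t,r)$. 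If you want to land on the lemma from your (correct) Duhamel representation, the step to take is the same freezing of the integrand at $s=r$, accepting the same implicit window approximation as the paper; as written, your argument instead imports precisely the non-commutativity that this subsection is designed to isolate, and if it were a genuine identity the remainder $R_{\epsilon,k}$ would play no role.
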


\begin{proof}
Indeed, from (\ref{mild last section}) we get%
\begin{align*}
D_{r}^{\left(  k\right)  }\theta_{\epsilon}\left(  t\right)   &
=\sum_{k^{\prime}\in K}\int_{r}^{t}e^{\left(  t-s\right)  A}\left(
\sigma_{k^{\prime}}\cdot\nabla\right)  D_{r}^{\left(  k\right)  }%
\theta_{\epsilon}\left(  s\right)  \frac{d\mathcal{G}_{s}^{k^{\prime}%
,\epsilon}}{ds}ds\\
&  +\int_{0}^{t}e^{\left(  t-s\right)  A}\left(  \sigma_{k}\cdot\nabla\right)
\theta_{\epsilon}\left(  s\right)  \frac{dD_{r}^{\left(  k\right)
}\mathcal{G}_{s}^{k,\epsilon}}{ds}ds.
\end{align*}
Then we use (\ref{approximate derivative}) to express $D_{r}^{\left(
k\right)  }\mathcal{G}_{s}^{k,\epsilon}$ and get%
\begin{align*}
D_{r}^{\left(  k\right)  }\theta_{\epsilon}\left(  t\right)   &
=\sum_{k^{\prime}\in K}\int_{r}^{t}e^{\left(  t-s\right)  A}\left(
\sigma_{k^{\prime}}\cdot\nabla\right)  D_{r}^{\left(  k\right)  }%
\theta_{\epsilon}\left(  s\right)  \frac{d\mathcal{G}_{s}^{k^{\prime}%
,\epsilon}}{ds}ds\\
&  +e^{\left(  t-r\right)  A}\left(  \sigma_{k}\cdot\nabla\right)
\theta_{\epsilon}\left(  r\right)  \chi_{\epsilon}\left(  t,r\right)
\end{align*}
which leads to the result by uniqueness for the equation defining
$U_{\epsilon}\left(  t,r\right)  $.
\end{proof}

\medskip
The problem is that we cannot commute $U_{\epsilon}\left(  t,r\right)  \left(
\sigma_{k}\cdot\nabla\right)  $ with $\left(  \sigma_{k}\cdot\nabla\right)
U_{\epsilon}\left(  t,r\right)  $, otherwise we would have%
\begin{align*}
D_{r}^{\left(  k\right)  }\theta_{\epsilon}\left(  t\right)   &
=\chi_{\epsilon}\left(  t,r\right)  \left(  \sigma_{k}\cdot\nabla\right)
U_{\epsilon}\left(  t,r\right)  \theta_{\epsilon}\left(  r\right) \\
&  =\chi_{\epsilon}\left(  t,r\right)  \left(  \sigma_{k}\cdot\nabla\right)
\theta_{\epsilon}\left(  t\right),
\end{align*}
(due to $\theta_{\epsilon}\left(  r\right)  =U_{\epsilon}\left(  r,0\right)
\theta_{0}$ and $U_{\epsilon}\left(  t,r\right)  U_{\epsilon}\left(
r,0\right)  =U_{\epsilon}\left(  t,0\right)  $) like in the commuting case.
Summarizing, until now we have established the following.

\begin{lemma}%
\[
\left\langle \mathbb{E}\left[  \theta_{\epsilon}\left(  t\right)  \right]
,\phi\right\rangle =\left\langle e^{tA}\theta_{0},\phi\right\rangle
\]%
\[
-\sum_{k\in K}\int_{0}^{t}\left\langle \left\langle \chi_{\epsilon}\left(
s,\cdot\right)  \mathbb{E}\left[  U_{\epsilon}\left(  s,\cdot\right)  \left(
\sigma_{k}\cdot\nabla\right)  \theta_{\epsilon}\left(  \cdot\right)  \right]
,\frac{1}{2\epsilon}1_{\left[  (s-\epsilon)_{+},s+\epsilon\right]
}\right\rangle _{\mathcal{H}},\left(  \sigma_{k}\cdot\nabla\right)  e^{\left(
t-s\right)  A}\phi\right\rangle ds.
\]

\end{lemma}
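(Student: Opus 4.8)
The plan is to run, in weak form, the very same scheme used in Section~\ref{Section Commutative}: pair the mild equation (\ref{mild last section}) with the test function $\phi$, rewrite the stochastic term via formula (\ref{basic formula}) as a mean-zero Skorohod integral plus a trace, take expectations so the Skorohod part disappears, and then insert the expression for $D_r^{(k)}\theta_\epsilon$ furnished by the Lemma just established. In fact the identity for $\langle\mathbb{E}[\theta_\epsilon(t)],\phi\rangle$ displayed immediately above the statement already carries out all of this except the last substitution, so essentially the only thing left is to replace $\mathbb{E}[D_\cdot^{(k)}\theta_\epsilon(s)]$ in that identity by the right-hand side coming from the previous Lemma.

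Concretely, first I would use that $\chi_\epsilon(s,\cdot)$ is a deterministic (bounded) function, so that
\[
\mathbb{E}\big[D_\cdot^{(k)}\theta_\epsilon(s)\big]=\chi_\epsilon(s,\cdot)\,\mathbb{E}\big[U_\epsilon(s,\cdot)(\sigma_k\cdot\nabla)\theta_\epsilon(\cdot)\big],
\]
and then substitute this into the displayed identity; pulling $\chi_\epsilon(s,\cdot)$ and the deterministic pairing $(\sigma_k\cdot\nabla)e^{(t-s)A}\phi$ out of the expectation gives exactly the asserted formula. To make the chain rigorous one checks along the way that $s\mapsto\langle\theta_\epsilon(s),(\sigma_k\cdot\nabla)e^{(t-s)A}\phi\rangle$ is Malliavin smooth with derivative computed by the previous Lemma — this is inherited from the Malliavin smoothness of $\theta_\epsilon(s)$ in the Gaussian family (obtained from the mild iteration) together with the fact that $(\sigma_k\cdot\nabla)e^{(t-s)A}\phi$ is deterministic — and that the Skorohod integral produced by (\ref{basic formula}) has zero expectation, which is the defining property of the divergence operator recalled in Section~\ref{Section preparatory}.

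The hard part will be the interchange of $\mathbb{E}$ with the $\mathcal{H}$-pairing and with the $ds$-integral forming the trace term, together with the well-posedness of the objects appearing in it: one must argue that $r\mapsto\mathbb{E}[U_\epsilon(s,r)(\sigma_k\cdot\nabla)\theta_\epsilon(r)]$ is a genuine element of $\mathcal{H}$ against which $\tfrac{1}{2\epsilon}1_{[(s-\epsilon)_+,s+\epsilon]}$ can be paired, and that the resulting iterated integral in $r$ and $s$ converges absolutely so that Fubini applies. For this I would combine the a priori energy bound $\sup_{t\le T}\|\theta_\epsilon(t)\|_{L^2}\le\|\theta_0\|_{L^2}$, a contraction estimate for $U_\epsilon$ of the type (\ref{U star}), the uniform bound $\sup_{s\in[0,t]}\|(\sigma_k\cdot\nabla)e^{(t-s)A}\phi\|_{L^2}\lesssim\|\nabla\phi\|_{L^2}$ (no singularity arises, $\phi$ being smooth and $e^{(t-s)A}$ an $L^2$-contraction commuting with $\nabla$), and the same finiteness of the $\mathcal{H}$-kernel integrals against $1_{[(s-\epsilon)_+,s+\epsilon]}$ that is used, via the bounded-variation hypothesis on $\gamma$, in the proof of Lemma~\ref{lemma1}. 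Once these estimates are in place the exchange of limits is routine and the identity follows.
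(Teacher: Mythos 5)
Your proposal is correct and follows exactly the paper's route: the paper derives the weak-form identity via the mild equation, formula (\ref{basic formula}) and the vanishing expectation of the Skorohod term, and then obtains the stated Lemma by substituting the expression $D_r^{(k)}\theta_\epsilon(s)=\chi_\epsilon(s,r)\,U_\epsilon(s,r)(\sigma_k\cdot\nabla)\theta_\epsilon(r)$ from the preceding Lemma, with the deterministic factor $\chi_\epsilon(s,\cdot)$ pulled out of the expectation, precisely as you do. Your added remarks on Fubini, the $\mathcal{H}$-pairing and the a priori bounds are reasonable supplements that the paper leaves implicit.
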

Adding and subtracting the term with $\left(  \sigma_{k}\cdot\nabla\right)
U_{\epsilon}\left(  t,r\right)  $ in place of $U_{\epsilon}\left(  t,r\right)
\left(  \sigma_{k}\cdot\nabla\right)  $ we have%
\[
\left\langle \mathbb{E}\left[  \theta_{\epsilon}\left(  t\right)  \right]
,\phi\right\rangle =\left\langle e^{tA}\theta_{0},\phi\right\rangle
\]%
\begin{align*}
&  -\sum_{k\in K}\int_{0}^{t}\left\langle \left\langle \chi_{\epsilon}\left(
s,\cdot\right)  ,\frac{1}{2\epsilon}1_{\left[  (s-\epsilon)_{+},s+\epsilon
\right]  }\right\rangle _{\mathcal{H}}\left(  \sigma_{k}\cdot\nabla\right)
\mathbb{E}\left[  \theta_{\epsilon}\left(  s\right)  \right]  ,\left(
\sigma_{k}\cdot\nabla\right)  e^{\left(  t-s\right)  A}\phi\right\rangle ds\\
&  +\sum_{k\in K}\int_{0}^{t}\left\langle \left\langle \chi_{\epsilon}\left(
s,\cdot\right)  R_{\epsilon,k}\left(  s,\cdot\right)  ,\frac{1}{2\epsilon
}1_{\left[  (s-\epsilon)_{+},s+\epsilon\right]  }\right\rangle _{\mathcal{H}%
},\left(  \sigma_{k}\cdot\nabla\right)  e^{\left(  t-s\right)  A}%
\phi\right\rangle ds,
\end{align*}
where for shortness of notations we have set%
\begin{equation}
R_{\epsilon,k}\left(  s,r\right)  =\mathbb{E}\left[  \left(  U_{\epsilon
}\left(  s,r\right)  \left(  \sigma_{k}\cdot\nabla\right)  -\left(  \sigma
_{k}\cdot\nabla\right)  U_{\epsilon}\left(  s,r\right)  \right)
\theta_{\epsilon}\left(  r\right)  \right] . \label{defin commut}%
\end{equation}
With the notations of Section \ref{Section Commutative}, we have%
\[
\left\langle \mathbb{E}\left[  \theta_{\epsilon}\left(  t\right)  \right]
,\phi\right\rangle =\left\langle e^{tA}\theta_{0}+\int_{0}^{t}e^{\left(
t-s\right)  A}\mathcal{L}\mathbb{E}\left[  \theta_{\epsilon}\left(  s\right)
\right]  d\mathcal{V}_{\epsilon}\left(  s\right)  ,\phi\right\rangle
\]%
\[
+\sum_{k\in K}\int_{0}^{t}\left\langle \left\langle \chi_{\epsilon}\left(
s,\cdot\right)  R_{\epsilon,k}\left(  s,\cdot\right)  ,\frac{1}{2\epsilon
}1_{\left[  (s-\epsilon)_{+},s+\epsilon\right]  }\right\rangle _{\mathcal{H}%
},\left(  \sigma_{k}\cdot\nabla\right)  e^{\left(  t-s\right)  A}%
\phi\right\rangle ds
\]
or%
\begin{align*}
\mathbb{E}\left[  \theta_{\epsilon}\left(  t\right)  \right]   &
=e^{tA}\theta_{0}+\int_{0}^{t}e^{\left(  t-s\right)  A}\mathcal{L}%
\mathbb{E}\left[  \theta_{\epsilon}\left(  s\right)  \right]  d\mathcal{V}%
_{\epsilon}\left(  s\right) \\
&  -\sum_{k\in K}\int_{0}^{t}e^{\left(  t-s\right)  A}\left(  \sigma_{k}%
\cdot\nabla\right)  \left\langle \chi_{\epsilon}\left(  s,\cdot\right)
R_{\epsilon}\left(  s,\cdot\right)  ,\frac{1}{2\epsilon}1_{\left[
(s-\epsilon)_{+},s+\epsilon\right]  }\right\rangle _{\mathcal{H}}ds.
\end{align*}
Notice that the mean field equation is
\[
\overline{\theta}\left(  t\right)  =e^{tA}\theta_{0}+\int_{0}^{t}e^{\left(
t-s\right)  A}\mathcal{L}\overline{\theta}\left(  s\right)  d\gamma\left(
s\right)  .
\]
The closedness of $\mathbb{E}\left[  \theta_{\epsilon}\left(  t\right)
\right]  $ to $\overline{\theta}\left(  t\right)  $ depends on the smallness
of the average commutator $R_{\epsilon,k}\left(  s,r\right)  $. Estimates on
$R_{\epsilon,k}\left(  s,r\right)  $ seem possible but those we have found
until now do not deserve to be reported, so we postpone this subject to future research.

\bigskip
\textbf{ACKNOWLEDGEMENTS}. This research was influenced by discussions with
several experts and in particular, for the first author, by James Michael
Leahy and Umberto Pappalettera. The research of the first author is funded by
the European Union (ERC, NoisyFluid, No. 101053472). Views and opinions
expressed are however those of the authors only and do not necessarily reflect
those of the European Union or the European Research Council. Neither the
European Union nor the granting authority can be held responsible for them.
The research of the second named author was partially supported by the
ANR-22-CE40-0015-01 (SDAIM).

\bibliographystyle{plain}
\bibliography{../../BIBLIO_FILE/BiblioFrancoFrancesco}
 \end{document}